\documentclass{amsart}
\usepackage [latin1]{inputenc}
\usepackage{amsmath,amssymb,amsbsy,amsfonts,amsthm,latexsym,
                       amsopn,amstext,amsxtra,euscript,amscd}
                       \usepackage{color}
\usepackage{array}
\usepackage{ifthen}
\usepackage{url}

\newtheorem{Thm}{Theorem}[section]
\newtheorem{Lem}[Thm]{Lemma}

\newtheorem{Conj}[Thm]{Conjecture}

\theoremstyle{definition}

\theoremstyle{remark}

\newcommand{\klammern}[4][]%
{\ifthenelse{\equal{#1}{}}{\left#2}{\csname#1\endcsname#2}%
#4\ifthenelse{\equal{#1}{}}{\right#3}{\csname#1\endcsname#3}}


\renewcommand{\epsilon}{\DONOTUSE}  

\renewcommand{\vartheta}{\DONOTUSE} 

\begin{document}

\title[The Explicit Formula and  Parity for Some Generalized Euler Functions ] {The Explicit  Formula and  Parity for Some Generalized Euler Functions}


\author[Shichun Yang]{Shichun Yang}
\address{School of mathematics,
ABa Teachers University,
Wenchuan 623000, Sichuan, P. R. China}
\email{ysc1020@sina.com}

\author[Qunying Liao]{Qunying Liao}
\address{School of Mathematical Sciences, Sichuan Normal University, Chengdu 610066, Sichuan, P. R. China}
 \email{qunyingliao@sicun.edu.cn}

\author[Shan Du]{Shan Du}
\address{School of Mathematical Sciences, Sichuan Normal University, Chengdu 610066, Sichuan, P. R. China}
 \email{dubingha@qq.com}

\author[Huili Wang]{Huili Wang}
\address{School of Mathematical Sciences, Sichuan Normal University, Chengdu 610066, Sichuan, P. R. China}
 \email{813870971@qq.com}



\subjclass[2010]{Primary 11A25; Secondary 11D45}
\date{\today}

\keywords{~~generalized Euler function, explicit formula,  M\"{o}bius function, parity.}

\begin{abstract} Based on elementary methods and techniques, the explicit formula for the generalized Euler function $\varphi_{e}(n)(e=8,12)$ is given, and then a sufficient and necessary condition for $\varphi_{8}(n)$ or $\varphi_{12}(n)$ to be odd is obtained, respectively.
\end{abstract} \maketitle

\section{Introduction}\label{sec:1}

Let  $\mathbb{Z}$ and $\mathbb{P}$ denote the set of  integers and primes, respectively. In order to generalize the Lehmer's congruence (see [6]) from modulo prime squares to be modulo
integer squares, Cai  [3] defined the following generalized Euler function for a positive integer $n$ related to a given positive integer $e$:
\begin{eqnarray}\label{eq:1}
\varphi_{e}(n)=\sum_{i=1, \,\gcd(i,n)=1} ^{[\frac{n}{e}]}1, \nonumber
\end{eqnarray}
where $[x]$ is the greatest integer not more than $x$, i.e., $\varphi_{e}(n)$ is the number of positive integers
not greater than $[\frac{n}{e}]$ and prime to $n$.
It is easy to verify that $\varphi_{1}(n)=\varphi(n)$ is just the Euler function of $n$, $\varphi_{2}(n)=\frac{1}{2}\varphi(n)$, and
\begin{eqnarray}\label{eq:M}
\varphi_{e}(n)=\sum_{d|n}\mu\Big(\frac{n}{d}\Big)\Big[\frac{d}{e}\Big],
\end{eqnarray}
where $\mu(n)$ is the M\"{o}bius function. There are some good results for the generalized Euler function and its applications, especially, concerning on
$\varphi_{e}(n)(e=2,3,4,6)$, which can be seen in [1], [2], [5] and [9].

In 2013, Cai, Shen and Hu [4] gave the explicit formula for $\varphi_{3}(n)$, and obtained a criterion on the parity for $\varphi_{2}(n)$ or $\varphi_{3}(n)$, respectively. In [10], the authors gave the explicit formulae for $\varphi_{4}(n)$ and $\varphi_{6}(n)$, and then
obtained some sufficient and necessary conditions for that  $\varphi_{e}(n)$ or $\varphi_{e}(n+1)$ is odd or even, respectively.

Recently, Wang and Liao [11] gave the formula for $\varphi_{5}(n)$ in some special cases
 and then obtained some sufficient conditions for that $\varphi_{5}(n)$ is even. Liao and Luo [7] gave a computing formula for
 $\varphi_{e}(n)$ $(e=p, p^{2}, pq)$, where $p$ and $q$ are distinct primes, and $n$ satisfies some certain conditions. Liao [8]
obtained the explicit formula for a special class of  generalized Euler functions. However, the explicit formula for $\varphi_{e}(n)(e\neq3, 4, 6)$ is not obtained in the general case.

In this paper, basing on the methods and techniques given in [4], [10] and [7], we study the explicit formula and the parity for $\varphi_{e}(n)(e=8,12)$, obtain the corresponding computing formula, and then give a sufficient and necessary condition for that $\varphi_{e}(n)(e=8,12)$ is odd or even, respectively.

For the convenience, the whole paper denotes $\Omega(n)$ and $\omega(n)$ to be the number of prime factors and distinct prime factors of a positive integer $n$, respectively. And for $k$ primes $p_{1}, \ldots, p_{k}$, set $\mathbb{P}_{k}=\{p_{1}, \ldots, p_{k}\}$,
$$
R_{\mathbb{P}_{k}}=\{r_{i}\mid p_{i}\equiv r_{i}(\mathrm{mod}\, 8), 0\leq r_{i}\leq7, p_{i}\in \mathbb{P}_{k}, 1\leq i\leq k \},
$$
and
$$R_{\mathbb{P}_{k}}'=\{r_{i}\mid p_{i}\equiv r_{i}(\mathrm{mod}\, 12), 0\leq r_{i}\leq11, p_{i}\in\mathbb{P}_{k}, 1\leq i\leq k \}.$$

\section{Preliminaries}\label{sec:2}
In this section, we first give the following Lemmas 2.1-2.2, which are necessary for the calculations of both $[\frac{m}{8}]$ and $[\frac{m}{12}]$.
\begin{Lem}\label{lem:2.1}\quad  For any odd positive integer $m$, we have
\begin{eqnarray}\label{eq:[m/8]}
\Big[\frac{m}{8}\Big]=\frac{1}{8}\Big(m-4+2\Big(\frac{-2}{m}\Big)+\Big(\frac{-1}{m}\Big)\Big).
\end{eqnarray}
Furthermore, if $\gcd(m, 6) = 1$, then we have
\begin{eqnarray}\label{eq:[m/12]}
\Big[\frac{m}{12}\Big]=\frac{1}{12}\Big(m-6+3\Big(\frac{-1}{m}\Big)+2\Big(\frac{-3}{m}\Big)\Big),
\end{eqnarray}
where $(\frac{a}{m})$  is the Jacobi symbol.
\end{Lem}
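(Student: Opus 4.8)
The plan is to reduce each identity to a short verification over the relevant residue classes modulo $8$ and modulo $12$, using the supplementary laws and reciprocity for the Jacobi symbol to guarantee that the right-hand sides are well-defined functions of those residues. No deep input is needed; the only thing requiring care is the periodicity claim that legitimizes the finite case check.

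For \eqref{eq:[m/8]}, since $m$ is odd I would write $m = 8q + r$ with $q = [m/8]$ and $r \in \{1,3,5,7\}$, so that $[m/8] = (m-r)/8$. The claim then reduces to the single identity
$$2\Bigl(\tfrac{-2}{m}\Bigr) + \Bigl(\tfrac{-1}{m}\Bigr) = 4 - r,$$
to be checked for each of the four admissible $r$. Both sides depend only on $m \bmod 8$: the right-hand side obviously, and the left-hand side because $\bigl(\tfrac{-1}{m}\bigr) = (-1)^{(m-1)/2}$ depends only on $m \bmod 4$ while $\bigl(\tfrac{2}{m}\bigr) = (-1)^{(m^2-1)/8}$ depends only on $m \bmod 8$, hence so does $\bigl(\tfrac{-2}{m}\bigr) = \bigl(\tfrac{-1}{m}\bigr)\bigl(\tfrac{2}{m}\bigr)$. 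For $r = 1,3,5,7$ one reads off the sign pairs $\bigl(\bigl(\tfrac{-1}{m}\bigr),\bigl(\tfrac{-2}{m}\bigr)\bigr) = (+,+),(-,+),(+,-),(-,-)$, so the left-hand side equals $3,1,-1,-3$ respectively, matching $4-r$ in every case.

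For \eqref{eq:[m/12]}, since $\gcd(m,6)=1$ I would write $m = 12q + r$ with $q = [m/12]$ and $r \in \{1,5,7,11\}$, so $[m/12] = (m-r)/12$ and it suffices to verify
$$3\Bigl(\tfrac{-1}{m}\Bigr) + 2\Bigl(\tfrac{-3}{m}\Bigr) = 6 - r$$
for these four residues. Here $\bigl(\tfrac{-1}{m}\bigr)$ depends only on $m \bmod 4$, and $\bigl(\tfrac{-3}{m}\bigr)$ only on $m \bmod 3$: by reciprocity for the Jacobi symbol together with the supplementary law one has $\bigl(\tfrac{-3}{m}\bigr) = \bigl(\tfrac{-1}{m}\bigr)\bigl(\tfrac{3}{m}\bigr) = \bigl(\tfrac{m}{3}\bigr)$ for odd $m$ coprime to $3$, so $\bigl(\tfrac{-3}{m}\bigr) = 1$ exactly when $m \equiv 1 \pmod 3$; hence both sides depend only on $m \bmod 12$. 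For $r = 1,5,7,11$ the sign pairs are $\bigl(\bigl(\tfrac{-1}{m}\bigr),\bigl(\tfrac{-3}{m}\bigr)\bigr) = (+,+),(+,-),(-,+),(-,-)$, giving left-hand side $5,1,-1,-5$, in agreement with $6-r$.

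The argument is elementary throughout, and I expect essentially no obstacle beyond sign bookkeeping. The one place that genuinely needs a word of justification — rather than silent computation — is the assertion that each Jacobi symbol occurring is periodic in $m$ with the stated modulus ($8$, resp. $12$); this is precisely what makes the four-case analysis both valid and exhaustive. Granting it, the two displayed identities collapse to the two four-row tables above, and the lemma follows.
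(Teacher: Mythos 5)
Your proof is correct and follows essentially the same route as the paper: evaluate $\bigl(\tfrac{-1}{m}\bigr)$, $\bigl(\tfrac{-2}{m}\bigr)$, $\bigl(\tfrac{-3}{m}\bigr)$ via the supplementary laws and reciprocity, note they depend only on $m \bmod 8$ (resp.\ $m \bmod 12$), and verify the identity case by case over the four admissible residues. Your reformulation as $2\bigl(\tfrac{-2}{m}\bigr)+\bigl(\tfrac{-1}{m}\bigr)=4-r$ and $3\bigl(\tfrac{-1}{m}\bigr)+2\bigl(\tfrac{-3}{m}\bigr)=6-r$ is just a tidier packaging of the same finite check.
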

\begin{proof}
For any odd positive integer $m$, by properties of the Jacobi symbol, we have
\begin{eqnarray}\label{eq:[m]}
\Big(\frac{-1}{m}\Big)=\begin{cases}
1,  & \mbox{$m\equiv1(\mathrm{mod}\,4)$},\\
-1,  & \mbox{$m\equiv3(\mathrm{mod}\,4)$},
 \end{cases} \;\; \mbox{and} \;\;\Big(\frac{-2}{m}\Big)=\begin{cases}
1,  & \mbox{$m\equiv1, 3(\mathrm{mod}\,8)$},\\
-1,  & \mbox{$m\equiv5, 7(\mathrm{mod}\,8)$}.
 \end{cases}  \nonumber
\end{eqnarray}
Thus from $m\equiv1(\mathrm{mod}\,8)$, we can get $\frac{1}{8}(m-4+2(\frac{-2}{m})+(\frac{-1}{m}))=\frac{1}{8}(m-1)$ and $[\frac{m}{8}]=\frac{1}{8}(m-1)$, namely, (2) is true. Similarly, if $m\equiv3, 5, 7(\mathrm{mod}\,8)$, by computing directly, (2) holds.

Furthermore, if $\gcd(m, 6) = 1$, then by the properties for the Jacobi symbol and the quadratic reciprocity law, we have
\begin{eqnarray}\label{eq:A2}
\Big(\frac{-3}{m}\Big)=\Big(\frac{-1}{m}\Big)\Big(\frac{m}{3}\Big)(-1)^{\frac{1}{4}(3-1)(m-1)}=\begin{cases}
1,  & \mbox{$m\equiv1, 7(\mathrm{mod}\,12)$},\\
-1,  & \mbox{$m\equiv5, 11(\mathrm{mod}\,12)$}.
 \end{cases}\nonumber
\end{eqnarray}
Thus by $m\equiv1(\mathrm{mod}\,12)$, we have $\frac{1}{12}(m-6+3(\frac{-1}{m})+2(\frac{-3}{m}))=\frac{1}{12}(m-1)=[\frac{m}{12}]$, i.e., (3) is true. Similarly, if $m\equiv5, 7, 11(\mathrm{mod}\,12)$, one can get (3) also.
\end{proof}

Now, we  give a property for the M\"{o}bius function, which unifies the cases of Lemma 1.5 in [4] and Lemmas 1.4-1.5 in [10].

\begin{Lem}\label{lem:2.2}\quad Let $a$ be a nonzero integer, $p_1,\ldots,p_k$ be distinct odd primes, and $\alpha_1,\ldots,\alpha_k$ be positive integers. Suppose that $n=\prod_{i=1}^{k} p_{i}^{\alpha_{i}}$ and $\gcd(p_{i}, a)=1 \,(1 \leq i \leq k)$, then
\begin{eqnarray}\label{eq:Mh(n)}
\sum_{d|n}\mu\Big(\frac{n}{d}\Big)\Big(\frac{a}{d}\Big)=
\prod_{i=1}^{k}\Big(\Big(\frac{a}{p_{i}}\Big)^{\alpha_{i}}-\Big(\frac{a}{p_{i}}\Big)^{\alpha_{i}-1}\Big).
\end{eqnarray}
\end{Lem}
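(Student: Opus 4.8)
The plan is to recognize the left-hand side of (\ref{eq:Mh(n)}) as a Dirichlet convolution of two multiplicative functions and reduce to prime powers. For the fixed nonzero integer $a$, the map $d\mapsto\big(\frac{a}{d}\big)$ on positive odd integers is completely multiplicative (a standard property of the Jacobi symbol in its lower entry, of the same flavor as the facts recalled in the proof of Lemma~\ref{lem:2.1}), and $\mu$ is multiplicative; hence $F(n):=\sum_{d\mid n}\mu\big(\frac{n}{d}\big)\big(\frac{a}{d}\big)$ is a multiplicative function of the odd integer $n=\prod_{i=1}^{k}p_i^{\alpha_i}$. Therefore it suffices to evaluate $F(p^{\alpha})$ for a single odd prime $p$ with $\gcd(p,a)=1$ and $\alpha\ge1$, and then take the product over $i=1,\dots,k$.

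For the prime-power case I would write $F(p^{\alpha})=\sum_{j=0}^{\alpha}\mu(p^{\alpha-j})\big(\frac{a}{p^{j}}\big)$. Since $\mu(p^{\alpha-j})=0$ unless $\alpha-j\in\{0,1\}$, only the terms $j=\alpha$ and $j=\alpha-1$ survive, and using $\mu(1)=1$, $\mu(p)=-1$ together with $\big(\frac{a}{p^{j}}\big)=\big(\frac{a}{p}\big)^{j}$ we get $F(p^{\alpha})=\big(\frac{a}{p}\big)^{\alpha}-\big(\frac{a}{p}\big)^{\alpha-1}$. Multiplying over $i$ yields exactly the claimed product. Equivalently — the same computation unpacked without invoking multiplicativity as a black box — one writes a divisor of $n$ as $d=\prod_i p_i^{\beta_i}$ with $0\le\beta_i\le\alpha_i$, observes that $\mu\big(\frac{n}{d}\big)=\mu\big(\prod_i p_i^{\alpha_i-\beta_i}\big)$ vanishes unless every $\alpha_i-\beta_i\in\{0,1\}$, so that the sum ranges only over tuples with $\beta_i\in\{\alpha_i-1,\alpha_i\}$; the summand then factors across the index $i$, the $i$-th local factor being $\big(\frac{a}{p_i}\big)^{\alpha_i}$ (when $\beta_i=\alpha_i$) plus $-\big(\frac{a}{p_i}\big)^{\alpha_i-1}$ (when $\beta_i=\alpha_i-1$, contributing the factor $\mu(p_i)=-1$). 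This reproduces (\ref{eq:Mh(n)}).

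I do not expect a genuine obstacle here; the proof is essentially a bookkeeping exercise. The only points deserving an explicit word are (i) that the Jacobi symbol $\big(\frac{a}{d}\big)$ is (completely) multiplicative in $d$, so that the convolution argument — or the factorization of the summand in the direct approach — is legitimate, and (ii) that $\gcd(p_i,a)=1$ guarantees $\big(\frac{a}{p_i}\big)=\pm1$, so that the local factors are well defined; both are standard. A minor care point is to make sure the exponent constraint on $\mu\big(\frac{n}{d}\big)$ is applied to \emph{every} prime simultaneously, which is what makes the surviving sum factor as a product rather than merely collapse prime by prime.
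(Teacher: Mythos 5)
Your proposal is correct and follows essentially the same route as the paper: both reduce to the multiplicativity of $d\mapsto\sum_{d\mid m}\mu\big(\frac{m}{d}\big)\big(\frac{a}{d}\big)$ (the paper verifies this by hand via the splitting $m=m_1p^{\alpha}$, while you invoke the standard convolution fact or unpack it directly) and then evaluate the prime-power case $\big(\frac{a}{p}\big)^{\alpha}-\big(\frac{a}{p}\big)^{\alpha-1}$ exactly as the paper does.
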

\begin{proof}
For a given integer $x$, set $f_x(m)=\sum_{d|m}\mu(\frac{m}{d})(\frac{x}{d}).$

First, if $m=p^{\alpha}$, where $p$ is an odd prime and $\alpha$ is a positive integer. Then by the definition of the M\"{o}bius function, we have
\begin{eqnarray}\label{eq:2.1}
f_a(m)=\mu(1)\Big(\frac{a}{p^{\alpha}}\Big)+\mu(p)\Big(\frac{a}{p^{\alpha-1}}\Big)
=\Big(\frac{a}{p}\Big)^{\alpha}-\Big(\frac{a}{p}\Big)^{\alpha-1}.\nonumber
\end{eqnarray}

Secondly, if $m=m_{1} p^{\alpha}$, where $\alpha$ is a positive integer,  $p$ is an odd prime with $\gcd(m_{1}, p)=1$, and $m_{1}$ is an odd positive integer. Then we have
\begin{eqnarray}\label{eq:2.1}
f_a(m)&=&\sum_{d|m_{1}}\mu\Big(\frac{m_{1}}{d }\Big)\Big(\frac{a}{d p^{\alpha}}\Big)+\sum_{d|m_{1}}\mu(p)\mu\Big(\frac{m_{1}}{d }\Big)\Big(\frac{a}{d p^{\alpha-1}}\Big)\nonumber\\
&=&\Big(\frac{a}{p}\Big)^{\alpha}\sum_{d|m_{1}}\mu\Big(\frac{m_{1}}{d }\Big)\Big(\frac{a}{d}\Big)-\Big(\frac{a}{p}\Big)^{\alpha-1}\sum_{d|m_{1}}\mu\Big(\frac{m_{1}}{d }\Big)\Big(\frac{a}{d}\Big).\nonumber\\
&=&\Big(\Big(\frac{a}{p}\Big)^{\alpha}-\Big(\frac{a}{p}\Big)^{\alpha-1}\Big)f_a(m_{1}).\nonumber
\end{eqnarray}
This means that $f_a(m)$ is a multiplicative function. Now denote $p^{\alpha}\|n$ to be the case both $p^{\alpha}\mid n$ and $p^{\alpha+1}\nmid n$,  then we can get
\begin{eqnarray}\label{eq:2.1}
f_a(n)=\prod_{p^{\alpha}\| n}\Big(\Big(\frac{a}{p}\Big)^{\alpha}-\Big(\frac{a}{p}\Big)^{\alpha-1}\Big)=
\prod_{i=1}^{k}\Big(\Big(\frac{a}{p_{i}}\Big)^{\alpha_{i}}-\Big(\frac{a}{p_{i}}\Big)^{\alpha_{i}-1}\Big).\nonumber
\end{eqnarray}

This completes the proof of Lemma 2.2.
\end{proof}

The following lemmas are necessary for proving our main results.

\begin{Lem}\label{lem:2.3}[4]\quad
Let $p_1,\ldots, p_k$ be distinct primes, and $\alpha,\alpha_1,\ldots,\alpha_k$ be nonnegative integers. If $n=3^{\alpha}\,\prod_{i=1}^{k} p_{i}^{\alpha_{i}}>3$ and $\gcd(p_{i}, 3) = 1 \,(1 \leq i \leq k)$, then
\begin{eqnarray}\label{eq:AB3-0}
\varphi_{3}(n)=\begin{cases}
\frac{1}{3}\,\varphi(n)+\frac{1}{3}\,(-1)^{\Omega(n)}2^{\omega(n)-\alpha-1},  & \mbox{if $\alpha=0$ or $1$, $p_{i}\equiv2(\mathrm{mod}\,3)$},\\
\frac{1}{3}\,\varphi(n),  & \mbox{otherwise}.
 \end{cases}\nonumber
\end{eqnarray}
\end{Lem}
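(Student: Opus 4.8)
The plan is to compute $\varphi_3(n)$ directly from the Möbius-type formula \eqref{eq:M}, namely $\varphi_3(n)=\sum_{d\mid n}\mu(n/d)[d/3]$, by splitting the sum according to whether $3\mid d$ or $3\nmid d$. Write $n=3^\alpha\prod_{i=1}^k p_i^{\alpha_i}$ and $N=\prod_{i=1}^k p_i^{\alpha_i}$, so the divisors $d$ of $n$ are exactly $3^\beta e$ with $0\le\beta\le\alpha$ and $e\mid N$. Since $\mu$ is supported on squarefree arguments, the $3$-part $3^{\alpha-\beta}$ of $n/d$ forces $\alpha-\beta\in\{0,1\}$, so only $\beta=\alpha$ and (when $\alpha\ge 1$) $\beta=\alpha-1$ contribute. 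For the terms with $3\mid d$ we have $3^{\beta}e/3$ is an integer, so $[3^\beta e/3]=3^{\beta-1}e$ and those contributions telescope into a clean multiple of $\varphi(N)$; the only genuinely delicate part comes from the terms with $\beta=0$, i.e.\ $d=e$ coprime to $3$, where $[e/3]$ is not simply $e/3$.

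The first key step, then, is to invoke Lemma~\ref{lem:2.1}: for $e$ coprime to $6$ (which holds since $e\mid N$ and the $p_i$ are coprime to $3$, assuming also the $p_i$ are odd; the case $2\mid N$ needs a small separate bookkeeping on $[e/3]$ for even $e$, handled by $[e/3]=\frac13(e-(\frac{e'}{3})\cdot(\dots))$ after pulling out the power of $2$) we substitute the exact expression for $[e/3]$ as a linear combination of $e$ and the quadratic character $(\frac{-3}{e})$ or equivalently $(\frac{e}{3})$. This converts $\sum_{e\mid N}\mu(N/e)[e/3]$ into $\frac13\sum_{e\mid N}\mu(N/e)e$ plus a constant times $\sum_{e\mid N}\mu(N/e)(\frac{-3}{e})$. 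The first sum is the standard identity $\sum_{e\mid N}\mu(N/e)e=\varphi(N)$, and the second is precisely the object evaluated by Lemma~\ref{lem:2.2} with $a=-3$: it equals $\prod_{i=1}^k\big((\frac{-3}{p_i})^{\alpha_i}-(\frac{-3}{p_i})^{\alpha_i-1}\big)$.

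The second key step is to analyze when that product over $i$ is nonzero. Each factor $(\frac{-3}{p_i})^{\alpha_i}-(\frac{-3}{p_i})^{\alpha_i-1}$ vanishes unless $(\frac{-3}{p_i})=-1$, i.e.\ unless $p_i\equiv 2\pmod 3$; and when $(\frac{-3}{p_i})=-1$ the factor equals $(-1)^{\alpha_i}-(-1)^{\alpha_i-1}=2\cdot(-1)^{\alpha_i}$. Hence the whole product is $0$ unless \emph{every} $p_i\equiv 2\pmod 3$, in which case it equals $2^k\prod(-1)^{\alpha_i}=2^{\omega(n)-\alpha-1}\cdot 2$ up to sign, with sign $(-1)^{\alpha_1+\cdots+\alpha_k}=(-1)^{\Omega(n)-\alpha}$, which matches $(-1)^{\Omega(n)}$ up to the even power $3^\alpha$ contributing $(-1)^\alpha$—one must track the $\alpha$ parity carefully here, using that $3\mid d$ contributions also carry a sign in the $\beta=\alpha-1$ piece. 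Assembling: $\varphi_3(n)=\frac13\varphi(n)+c_\alpha\cdot(\text{the product})$, where the coefficient $c_\alpha$ collapses the $3$-part contributions; one checks $c_\alpha=\frac13$ exactly when $\alpha=0$, and for $\alpha=1$ the $\beta=\alpha-1=0$ term adds another copy with the right sign so the coefficient is again $\frac13$ (while for $\alpha\ge 2$ there is no $3$-reduction issue and the character sum's coefficient still gives $\frac13$, but then $3^2\mid n$ and—wait—no, the character sum is unaffected by $\alpha$; the case split in the statement must instead come from whether the $[e/3]$-correction survives, i.e.\ it is really only the parity of $\alpha$ together with $p_i\equiv 2$ that matters).

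The main obstacle I anticipate is precisely this final bookkeeping: correctly matching the sign $(-1)^{\Omega(n)}$ and the exponent $\omega(n)-\alpha-1$ in the stated formula, and pinning down exactly why the correction term survives only when "$\alpha=0$ or $1$ and all $p_i\equiv 2\pmod 3$" rather than for all even $\alpha$. This requires carefully combining (i) the sign coming from $\prod(-1)^{\alpha_i}$, (ii) any sign or scaling picked up from the $3\mid d$ telescoping when $\alpha\ge 1$, and (iii) the extra factor of $2$ from each factor $2(-1)^{\alpha_i}$ versus the $2^{\omega(n)-\alpha-1}$ claimed—noting $2^{k}=2\cdot 2^{k-1}=2\cdot 2^{\omega(n)-\alpha-1}$ only when $\alpha\ge 1$, so the $\alpha=0$ and $\alpha\ge 1$ cases distribute the factor of $2$ differently and this is exactly where the case distinction in Lemma~\ref{lem:2.3} originates. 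Everything else is routine manipulation of multiplicative functions and Jacobi symbols. (As this is a cited lemma from [4], I would in practice simply refer the reader there, but the above is how I would reconstruct it.)
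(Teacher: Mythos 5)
Two preliminary remarks: the paper does not prove Lemma 2.3 at all (it is quoted from [4]), so there is no internal proof to compare with; and your overall skeleton is indeed the one used in the paper's analogous arguments (Theorem 3.1 and Theorem 4.2): write $n=3^{\alpha}N$ with $\gcd(N,3)=1$, expand $\varphi_{3}(n)=\sum_{d\mid n}\mu(n/d)[d/3]$, observe that $\mu$ kills every divisor $3^{\beta}e$ with $\beta\le\alpha-2$, replace $[e/3]$ for $\gcd(e,3)=1$ by the exact identity $[e/3]=\tfrac16\big(2e-3+\big(\tfrac{e}{3}\big)\big)$, and evaluate the twisted sum $\sum_{e\mid N}\mu(N/e)\big(\tfrac{e}{3}\big)$ as a product over the $p_i$, which vanishes unless every $p_i\equiv2\pmod 3$ and otherwise equals $(-1)^{\Omega(N)}2^{k}$.

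However, as written your outline has genuine gaps at exactly the point that produces the lemma's case distinction. First, the split is not governed by the parity of $\alpha$, which is where your third paragraph lands ("it is really only the parity of $\alpha$ together with $p_i\equiv 2$ that matters" is false): the correct mechanism is that for $\alpha\ge2$ every divisor $d$ with $\mu(n/d)\neq0$ is divisible by $3$, so $[d/3]=d/3$ exactly, no character term ever appears, and $\varphi_{3}(n)=\tfrac13\varphi(n)$ for all $\alpha\ge2$, even or odd; only for $\alpha=0,1$ does the $3$-free layer $d=e$ carry Möbius weight ($+\mu(N/e)$, resp. $-\mu(N/e)$), producing a correction $\pm\tfrac16\prod_i\big(\big(\tfrac{p_i}{3}\big)^{\alpha_i}-\big(\tfrac{p_i}{3}\big)^{\alpha_i-1}\big)$ whose sign combines with $(-1)^{\Omega(N)}$ to give $(-1)^{\Omega(n)}$ and whose factor $2^{k}$ equals $2\cdot2^{\omega(n)-\alpha-1}$ in both cases $\alpha=0$ and $\alpha=1$ (your parenthetical that this holds "only when $\alpha\ge1$" is also wrong, and the coefficient in front of the product is $\tfrac16$, not $\tfrac13$). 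You explicitly leave this bookkeeping unresolved, yet it is the entire content of the lemma. Second, Lemma 2.2 and the Jacobi symbol $\big(\tfrac{-3}{e}\big)$ require odd arguments, whereas here the $p_i$ need only be prime to $3$, so $2\mid n$ is allowed; the clean repair is to run the Lemma 2.2 argument with the completely multiplicative character $e\mapsto\big(\tfrac{e}{3}\big)$ instead of a Jacobi symbol with $e$ downstairs, rather than the vague "pull out the power of $2$" you sketch. Relatedly, Lemma 2.1 only treats $[m/8]$ and $[m/12]$, so the $[e/3]$ identity you need must be supplied separately (it is elementary, but it is not in Lemma 2.1).
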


\begin{Lem}\label{lem:2.4}[10]\quad Let $p_1,\ldots, p_k$ be distinct odd primes, and $\alpha,\alpha_1,\ldots,\alpha_k$ be nonnegative integers.
If $n =2^{\alpha}\,\prod_{i=1}^{k} p_{i}^{\alpha_{i}}>4$, then
\begin{eqnarray}\label{eq:AB3-0}
\varphi_{4}(n)=\begin{cases}
\frac{1}{4}\,\varphi(n)+\frac{1}{4}\,(-1)^{\Omega(n)}2^{\omega(n)-\alpha},  & \mbox{if $\alpha=0$ or $1$, $p_{i}\equiv3(\mathrm{mod}\,4)$},\\
\frac{1}{4}\,\varphi(n),  & \mbox{otherwise}.
 \end{cases}\nonumber
\end{eqnarray}
\end{Lem}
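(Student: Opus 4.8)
The plan is to read the formula off directly from the M\"obius identity \eqref{eq:M}, namely $\varphi_{4}(n)=\sum_{d\mid n}\mu(n/d)\,[d/4]$, by isolating the contribution of the quadratic symbol $(\frac{-1}{d})$. Write $n=2^{\alpha}m$ with $m$ odd. Since $\mu(n/d)=0$ unless $n/d$ is squarefree, only a restricted set of divisors $d$ contributes, and for each such $d$ I would replace $[d/4]$ by a closed form depending on $d\bmod 4$: if $d$ is odd then $[d/4]=\frac14\big(d-2+(\frac{-1}{d})\big)$, if $d\equiv2\pmod{4}$ then $[d/4]=\frac14(d-2)$, and if $4\mid d$ then $[d/4]=d/4$ (each verified by checking the two or four relevant residues, using the value of $(\frac{-1}{d})$ recalled in the proof of Lemma~\ref{lem:2.1}). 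The two elementary identities $\sum_{d\mid m}\mu(m/d)\,d=\varphi(m)$ and $\sum_{d\mid m}\mu(m/d)=0$ (the latter since $m>1$), together with Lemma~\ref{lem:2.2} applied with $a=-1$, then do all the work; the latter gives
\[
\sum_{d\mid m}\mu\!\left(\frac{m}{d}\right)\!\left(\frac{-1}{d}\right)=\prod_{i=1}^{k}\left(\left(\frac{-1}{p_{i}}\right)^{\alpha_{i}}-\left(\frac{-1}{p_{i}}\right)^{\alpha_{i}-1}\right),
\]
whose $i$-th factor is $0$ when $p_{i}\equiv1\pmod{4}$ and is $2(-1)^{\alpha_{i}}$ when $p_{i}\equiv3\pmod{4}$; hence the product vanishes unless every $p_{i}\equiv3\pmod{4}$, in which case it equals $(-1)^{\alpha_{1}+\dots+\alpha_{k}}\,2^{k}$. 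This product is precisely the origin of the correction term.

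I would then split into three cases according to $\alpha$. If $\alpha=0$, all divisors of $n$ are odd, so inserting the closed form for $[d/4]$ and applying the three identities gives $\varphi_{4}(n)=\frac14\varphi(n)+\frac14\sum_{d\mid n}\mu(n/d)(\frac{-1}{d})$; here $\omega(n)=k$ and $\Omega(n)=\alpha_{1}+\dots+\alpha_{k}$, so the extra term is $0$ unless all $p_{i}\equiv3\pmod{4}$ and otherwise equals $\frac14(-1)^{\Omega(n)}2^{\omega(n)}=\frac14(-1)^{\Omega(n)}2^{\omega(n)-\alpha}$, which is the asserted formula. If $\alpha=1$, the divisors of $n$ are the $d_{1}\mid m$ and the $2d_{1}$ with $d_{1}\mid m$; using $\mu(2m/d_{1})=-\mu(m/d_{1})$, $\mu(m/d_{1})$ for the even divisor, $[2d_{1}/4]=\frac{d_{1}-1}{2}$, and $\varphi(n)=\varphi(m)$, the two sums recombine to $\varphi_{4}(n)=\frac14\varphi(n)-\frac14\sum_{d_{1}\mid m}\mu(m/d_{1})(\frac{-1}{d_{1}})$, and since now $\omega(n)=k+1$ and $\Omega(n)=1+\alpha_{1}+\dots+\alpha_{k}$, one checks $-(-1)^{\alpha_{1}+\dots+\alpha_{k}}2^{k}=(-1)^{\Omega(n)}2^{\omega(n)-\alpha}$, again recovering the formula. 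Finally, if $\alpha\ge2$, then $4\mid n$, so $n/d$ is squarefree only when $2^{\alpha-1}\mid d$; every contributing divisor is therefore even, the symbol $(\frac{-1}{d})$ never occurs, and writing $d=2^{\alpha-1}d_{1}$ or $d=2^{\alpha}d_{1}$ with $d_{1}\mid m$ and evaluating $[d/4]$ exactly (with a separate but equally short treatment of $\alpha=2$, where $[2^{\alpha-1}d_{1}/4]=\frac{d_{1}-1}{2}$), the sums telescope to $\varphi_{4}(n)=\frac14\varphi(n)$, matching the ``otherwise'' branch.

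The routine parts are the residue-by-residue checks of the closed forms for $[d/4]$ and the bookkeeping converting $(-1)^{\alpha_{1}+\dots+\alpha_{k}}2^{k}$ into $\frac14(-1)^{\Omega(n)}2^{\omega(n)-\alpha}$ for $\alpha=0,1$. The main obstacle I anticipate is organizing the divisor sum cleanly when $4\mid n$: one has to pin down exactly which divisors survive the squarefreeness of $n/d$ and handle the boundary case $m=1$ (possible only when $\alpha\ge3$, as $n>4$) and the mildly exceptional subcase $\alpha=2$ without double counting. Once that is done, each case collapses in one or two lines using the convolution identities $\mu\ast\mathrm{id}$, $\mu\ast\mathbf 1$ and Lemma~\ref{lem:2.2}.
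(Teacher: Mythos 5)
Your proof is correct: the case analysis on $\alpha$, the closed form $[d/4]=\frac14\big(d-2+\big(\frac{-1}{d}\big)\big)$ for odd $d$, and the evaluation of $\sum_{d\mid m}\mu(m/d)\big(\frac{-1}{d}\big)$ via Lemma~\ref{lem:2.2} all check out, including the boundary cases $m=1$ and $\alpha=2$. The paper itself does not prove this lemma (it is quoted from [10]), but your argument is exactly the method the paper uses for its own Theorem~\ref{thm:3.1} and Theorem~\ref{thm:4.2} --- the M\"obius identity \eqref{eq:M} plus Lemmas~\ref{lem:2.1}--\ref{lem:2.2} with a split according to the power of $2$ --- so it is essentially the same approach.
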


\begin{Lem}\label{lem:2.5}[10]\quad  Let $p_1,\ldots, p_k$ be distinct primes, and $\alpha,\beta,\alpha_1,\ldots,\alpha_k$ be nonnegative integers.
If  $n =2^{\alpha}\,3^{\beta}\,\prod_{i=1}^{k} p_{i}^{\alpha_{i}}>6$ and $\gcd(p_{i}, 6) = 1 \,(1 \leq i \leq k)$, then
\begin{eqnarray}\label{eq:A2}
\varphi_{6}(n)=\begin{cases}
\frac{1}{6}\,\varphi(n)+\frac{1}{6}\,(-1)^{\Omega(n)}\,2 \,^{\omega(n)+1-\beta},  & \mbox{if $\alpha=0$ and $\beta=0$ or $1$, $p_{i}\equiv5(\mathrm{mod}\,6)$},\\
\frac{1}{6}\,\varphi(n)+\frac{1}{6}\,(-1)^{\Omega(n)}\,2 \,^{\omega(n)-1-\beta},  & \mbox{if $\alpha=1$ and $\beta=0$ or $1$, $p_{i}\equiv5(\mathrm{mod}\,6)$},\\
\frac{1}{6}\,\varphi(n)-\frac{1}{6}\,(-1)^{\Omega(n)}\,2 \,^{\omega(n)-\beta},  & \mbox{if $\alpha\geq2$ and $\beta=0$ or $1$, $p_{i}\equiv5(\mathrm{mod}\,6)$},\\
\frac{1}{6}\,\varphi(n),  & \mbox{otherwise}.
 \end{cases}\nonumber
\end{eqnarray}
\end{Lem}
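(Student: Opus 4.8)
The plan is to read the formula off the Möbius identity \eqref{eq:M}, i.e. $\varphi_{6}(n)=\sum_{d\mid n}\mu(n/d)[d/6]$. Writing $[d/6]=\tfrac{d}{6}-\tfrac{1}{6}(d\bmod 6)$ and using $\sum_{d\mid n}\mu(n/d)\,d=\varphi(n)$ (which is just the case $e=1$ of \eqref{eq:M}), the problem reduces to computing
\[
T:=\sum_{d\mid n}\mu\Big(\frac{n}{d}\Big)(d\bmod 6),\qquad \varphi_{6}(n)=\frac{1}{6}\varphi(n)-\frac{1}{6}T.
\]
Since $\mu(n/d)=0$ unless $n/d$ is squarefree, the only divisors contributing to $T$ are $d=2^{a}3^{b}m$ with $m\mid M:=\prod_{i=1}^{k}p_{i}^{\alpha_{i}}$, $\gcd(m,6)=1$, $M/m$ squarefree, $a\in\{\alpha-1,\alpha\}$ and $b\in\{\beta-1,\beta\}$ (with $\{\alpha-1,\alpha\}$ read as $\{0\}$ when $\alpha=0$, and similarly for $\beta$).

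Next I would evaluate $d\bmod 6$ in each of these finitely many regimes by the Chinese Remainder Theorem. If $a\ge1$ and $b\ge1$ then $6\mid d$ and the term is $0$; if $a=0$, $b\ge1$ then $d$ is odd and $3\mid d$, so $d\bmod 6=3$; if $a\ge1$, $b=0$ then $d$ is even and prime to $3$, and since $(\tfrac{2}{3})=-1$ one gets $d\bmod 6=3+(-1)^{a}(\tfrac{-3}{m})$; and if $a=0$, $b=0$ then $d=m$ is prime to $6$ and $d\bmod 6=3-2(\tfrac{-3}{m})$. Here I use $(\tfrac{-3}{m})=(\tfrac{m}{3})$, which follows from quadratic reciprocity exactly as in the proof of Lemma~\ref{lem:2.1}. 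Substituting these values and separating the constant terms from the Jacobi-symbol terms, $T$ becomes a linear combination — with coefficients depending only on $\alpha$ and $\beta$ — of the two sums $\sum_{m\mid M}\mu(M/m)$ and $\sum_{m\mid M}\mu(M/m)(\tfrac{-3}{m})$.

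The first sum vanishes whenever $M>1$ (equivalently $k\ge1$); the degenerate case $M=1$, $k=0$ is ruled out directly using $n>6$. The second sum is exactly Lemma~\ref{lem:2.2} with $a=-3$, giving $\prod_{i=1}^{k}\big((\tfrac{-3}{p_{i}})^{\alpha_{i}}-(\tfrac{-3}{p_{i}})^{\alpha_{i}-1}\big)$, which is $0$ unless $(\tfrac{-3}{p_{i}})=-1$ — i.e. $p_{i}\equiv5\pmod 6$ — for every $i$, and otherwise equals $(-1)^{\Omega(M)}2^{k}$. This explains both why the ``otherwise'' case is $\varphi_{6}(n)=\tfrac16\varphi(n)$ and why all the exceptional cases carry the hypothesis $p_{i}\equiv5\pmod 6$ together with the constraint $\beta\le1$ (forced by $\mu(3^{\beta})\ne0$).

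Finally I would carry out the bookkeeping separately for $\alpha=0$, $\alpha=1$ and $\alpha\ge2$ (each with $\beta=0$, $1$, or $\ge2$). In every surviving subcase $T$ equals an explicit $\alpha,\beta$-dependent constant times $(-1)^{\Omega(M)}2^{k}$; using $\Omega(n)=\alpha+\beta+\Omega(M)$, the fact that summing $\mu(3^{\beta-b})$ over $b$ contributes $(-1)^{\beta}$ (and forces $\beta\le1$), the fact that summing $\mu(2^{\alpha-a})(-1)^{a}$ over $a$ contributes $2(-1)^{\alpha}$ when $\alpha\ge2$ (with $\alpha=0,1$ handled by hand, since there the term $a=0$ uses the residue $3-2(\tfrac{-3}{m})$ instead), and the elementary dependence of $\omega(n)$ on whether $2\mid n$ and $3\mid n$, one checks that the three regimes produce precisely the exponents $2^{\omega(n)+1-\beta}$, $2^{\omega(n)-1-\beta}$, $2^{\omega(n)-\beta}$ with signs $+$, $+$, $-$, as claimed. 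I expect this last step — keeping the signs and the powers of $2$ straight across the full $3\times3$ array of cases — to be the only real obstacle; everything before it is routine.
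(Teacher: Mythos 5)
First, a point of comparison: the paper gives no proof of Lemma~\ref{lem:2.5} at all --- it is imported verbatim from reference [10] --- so there is no in-paper argument to measure you against. Your route is a legitimate self-contained proof, and it is in the same spirit as the paper's own proofs of Theorem~\ref{thm:3.1} and Theorem~\ref{thm:4.2} (M\"obius identity plus evaluation of a character sum via Lemma~\ref{lem:2.2}), but organized differently: instead of splitting off the powers of $2$ and $3$ and reducing to $\varphi_2,\varphi_3,\varphi_4$ of smaller arguments, you treat all divisors uniformly through $[d/6]=\tfrac16\big(d-(d\bmod 6)\big)$ and compute $d\bmod 6$ by CRT. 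I checked the bookkeeping in your three regimes ($\alpha=0$, $\alpha=1$, $\alpha\ge2$, each with $\beta\in\{0,1\}$ or $\beta\ge2$): the bracket of $2$-adic weights is $-2$, $+1$, $2(-1)^{\alpha}$ respectively, and combined with the weight $(-1)^{\beta}$ from $b=0$ and with $\Omega(n)=\alpha+\beta+\Omega(n_1)$, $\omega(n)=k+[\alpha\ge1]+[\beta\ge1]$, this reproduces exactly the signs and exponents $2^{\omega(n)+1-\beta}$, $2^{\omega(n)-1-\beta}$, $2^{\omega(n)-\beta}$ of the statement. So the method is sound and the plan does deliver the lemma.

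The one genuine flaw is your dismissal of the case $M=1$ (no prime factor outside $\{2,3\}$): it is \emph{not} ruled out by $n>6$. For $\alpha\le1$ and $\beta\le1$ it is (then $n\le6$), but $n=8,9,12,16,18,\dots$ all have $M=1$ and $n>6$, and the lemma is meant to cover them --- indeed the paper itself invokes Lemma~\ref{lem:2.5} for $\varphi_6(3^{\beta})$, $\beta\ge2$, in the proof of Theorem~\ref{thm:4.1}, and the third case of the lemma with vacuous conditions is exactly what gives, e.g., $\varphi_6(8)=1$. In your scheme the point is that when $M=1$ the sum $\sum_{m\mid M}\mu(M/m)$ equals $1$ rather than $0$, so the constant terms $3$ no longer cancel; a short direct computation (the residues are $3+(-1)^a$ for $b=0$, $3$ for $a=0<b$, $0$ otherwise) shows they produce precisely the claimed correction $-\tfrac16(-1)^{\Omega(n)}2^{\omega(n)-\beta}$ when $\alpha\ge2$, $\beta\le1$, and $0$ otherwise. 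So the gap is minor and easily repaired, but as written the justification ``ruled out using $n>6$'' is false and the $M=1$ subcase must be added.
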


\begin{Lem}\label{lem:2.6}[7]\quad  Let $p_1,\ldots, p_k$ be distinct primes, and $\alpha_1,\ldots,\alpha_k$ be positive integers.
If $n =\prod_{i=1}^{k} p_{i}^{\alpha_{i}}$ and $e=\prod_{i=1}^{k} p_{i}^{\beta_{i}}$ with $0\leq\beta_{i}\leq\alpha_{i}-1(1 \leq i \leq k)$,
then
\begin{eqnarray}\label{eq:n/e}
\varphi_{e}(n)=\frac{1}{e}\,\varphi(n).
\end{eqnarray}
\end{Lem}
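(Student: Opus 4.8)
The plan is to read $\varphi_e(n)$ off the Möbius expression \eqref{eq:M} and to observe that the hypothesis $\beta_i\le\alpha_i-1$ is exactly what is needed to turn every floor $[d/e]$ appearing there into an honest integer; once that is done the statement collapses to a classical identity.

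First I would write, using \eqref{eq:M}, $\varphi_e(n)=\sum_{d\mid n}\mu(n/d)[d/e]$, and note that $\mu(n/d)=0$ unless $n/d$ is squarefree, so only those divisors $d$ of $n$ with $n/d$ squarefree contribute. Writing such a $d$ as $d=\prod_{i=1}^k p_i^{\gamma_i}$, squarefreeness of $n/d=\prod_{i=1}^k p_i^{\alpha_i-\gamma_i}$ forces $\gamma_i\in\{\alpha_i-1,\alpha_i\}$ for each $i$; in particular $\gamma_i\ge\alpha_i-1\ge\beta_i$, so $p_i^{\beta_i}\mid d$ for every $i$ and hence $e=\prod_{i=1}^k p_i^{\beta_i}\mid d$. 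Therefore $[d/e]=d/e$ exactly, for each $d$ that contributes to the sum.

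Then I would conclude $\varphi_e(n)=\sum_{d\mid n}\mu(n/d)\dfrac{d}{e}=\dfrac1e\sum_{d\mid n}\mu(n/d)\,d=\dfrac1e\,\varphi(n)$, the last step being the standard formula $\mu*\mathrm{id}=\varphi$ (equivalently, specialize \eqref{eq:M} to $e=1$, where $[d/1]=d$ and $\varphi_1(n)=\varphi(n)$, so that $\sum_{d\mid n}\mu(n/d)\,d=\varphi(n)$ is already recorded in the paper). This finishes the proof.

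I do not expect a genuine obstacle here: the only point that needs care is the claim that $\mu(n/d)\neq0$ forces each prime exponent of $d$ to be at least $\alpha_i-1$, which together with $\beta_i\le\alpha_i-1$ gives $e\mid d$ and hence the exactness of the floor. A self-contained alternative avoiding \eqref{eq:M} is also available: since $e\mid n$ one has $[n/e]=n/e$, and $n/e=\prod_{i=1}^k p_i^{\alpha_i-\beta_i}$ is a multiple of the radical $p_1\cdots p_k$; grouping $\{1,\dots,n/e\}$ into $\tfrac{n/e}{p_1\cdots p_k}$ consecutive blocks of length $p_1\cdots p_k$, each containing exactly $\prod_{i=1}^k(p_i-1)$ integers prime to $n$, yields $\varphi_e(n)=\tfrac{n/e}{p_1\cdots p_k}\prod_{i=1}^k(p_i-1)=\varphi(n)/e$.
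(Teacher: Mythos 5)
Your proposal is correct. Note that the paper itself does not prove Lemma 2.6 at all --- it is quoted from reference [7] --- so there is no in-paper argument to compare against; your write-up supplies a complete justification. The key observation is sound: in $\sum_{d\mid n}\mu(n/d)[d/e]$ only divisors $d$ with $n/d$ squarefree matter, and for those every exponent of $d$ is at least $\alpha_i-1\geq\beta_i$, so $e\mid d$ and the floor is exact; the conclusion then follows from $\sum_{d\mid n}\mu(n/d)\,d=\varphi(n)$, which is the $e=1$ case of the paper's formula (1). Your second, counting argument (blocks of length $p_1\cdots p_k$ inside $\{1,\dots,n/e\}$, using that $e\mid n$ and that $n/e$ is divisible by the radical of $n$) is also valid and has the merit of bypassing the M\"obius identity entirely; either version would serve as a proof of the lemma.
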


\section{The Explicit  Formula  For $\varphi_{8}(n)$}\label{sec:3}

First, for a fixed positive integer $\alpha$ and $n=2^{\alpha}$, by Lemma 2.6 we can obtain
\begin{eqnarray}\label{eq:A0}
\varphi_{8}(2^{\alpha})=\begin{cases}
0,  & \mbox{if $\alpha=1, 2$},\\
1,  & \mbox{if $\alpha=3$},\\
2^{\alpha-4},  & \mbox{if  $\alpha\geq 4$}.
 \end{cases}
\end{eqnarray}

Next, we consider the case $n=2^{\alpha}n_{1}$, where $n_{1}>1$ is an odd integer.  We have the following
\begin{Thm}\label{thm:3.1}\quad Suppose that $\alpha$ is a non-negative integer, $p_{1},\ldots, p_k$ are distinct odd primes, and $n=2^{\alpha}\,\prod_{i=1}^{k} p_{i}^{\alpha_{i}}>8.$  Then we have
\begin{eqnarray}\label{eq:A1}
\varphi_{8}(n)=\begin{cases}
\frac{1}{8}\,\varphi(n)+\frac{1}{4}\,(-1)^{\Omega(n)}\,2 \,^{\omega(n)-\alpha},\\
  \;\;\;\;\;\;\;\;\;\;\;\;\;\;\;\;\;\;\;\;\;\;\;\;\;\;\;\;\;\;\;\;\;\;\;\; \mbox{if $\alpha=0, 1, \mbox{and}\ R_{\mathbb{P}_{k}}=\{5, 7\}, \{5\}$};\\
\frac{1}{8}\,\varphi(n)+\frac{1}{8}\,(-1)^{\Omega(n)-[\frac{\alpha+1}{2}]}\,2 \,^{\omega(n)-\frac{1}{2}(1-(-1)^{\alpha})},\\
  \;\;\;\;\;\;\;\;\;\;\;\;\;\;\;\;\;\;\;\;\;\;\;\;\;\;\;\;\;\;\;\;\;\;\;\mbox{if $\alpha=0, 1, 2, \mbox{and}\ R_{\mathbb{P}_{k}}=\{3, 7\}, \{3\}$};\\
\frac{1}{8}\,\varphi(n)+\frac{1}{8}\,(-1)^{\Omega(n)-[\frac{\alpha}{2}]}\,2 \,^{\omega(n)-\frac{1}{2}(1-(-1)^{\alpha})}\\
  \;\;\;\;\;\;\;\;\;\;\;\;\;\;\;\;\;\;\;\;\;\;\;\;\;\;\;\;\;\;\;\;\;+\frac{1-[\frac{\alpha+1}{2}]}{4}\,(-1)^{\Omega(n)}\,2^{\omega(n)},\\
\;\;\;\;\;\;\;\;\;\;\;\;\; \;\;\;\;\;\;\;\;\;\;\;\;\;\;\;\;\;\;\;\;\;\; \mbox{if $\alpha=0, 1, 2$, \mbox{and}\ $R_{\mathbb{P}_{k}}=\{7\}$};\\
\frac{1}{8}\,\varphi(n), \;\;\;\;\;\;\;\;\;\;\;\; \;\;\;\;\;\;\;\; \;\; \mbox{otherwise}.
 \end{cases}
\end{eqnarray}
\end{Thm}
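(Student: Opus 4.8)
The plan is to expand $\varphi_8(n)$ via \eqref{eq:M}, writing $\varphi_8(n)=\sum_{d\mid n}\mu(n/d)\,[d/8]$, and to sort the divisors by their $2$-adic valuation. Put $n_1=\prod_{i=1}^k p_i^{\alpha_i}$, so $n=2^{\alpha}n_1$ with $n_1$ odd (we may assume $n_1>1$, the pure prime-power case being \eqref{eq:A0}), and write each $d\mid n$ as $d=2^{\beta}d_1$ with $d_1\mid n_1$; then $\mu(n/d)=\mu(2^{\alpha-\beta})\mu(n_1/d_1)$, which vanishes unless $\alpha-\beta\in\{0,1\}$, so only $\beta=\alpha$ and $\beta=\alpha-1$ (and, when $\alpha=0$, only $\beta=0$) contribute. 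For the surviving values of $\beta$ I record $[2^{\beta}d_1/8]$ explicitly for $d_1$ odd: for $\beta=0$ it is $[d_1/8]=\tfrac18\bigl(d_1-4+2(\tfrac{-2}{d_1})+(\tfrac{-1}{d_1})\bigr)$ by \eqref{eq:[m/8]}; for $\beta=1$ it is $[d_1/4]=\tfrac14\bigl(d_1-2+(\tfrac{-1}{d_1})\bigr)$; for $\beta=2$ it is $[d_1/2]=\tfrac12(d_1-1)$; and for $\beta\ge3$ it is $2^{\beta-3}d_1$, with no Jacobi symbol. Substituting and collecting, the linear term contributes $\sum_{d_1\mid n_1}\mu(n_1/d_1)d_1=\varphi(n_1)$, each constant term contributes $0$ since $n_1>1$, and the two twisted sums $\sum_{d_1\mid n_1}\mu(n_1/d_1)(\tfrac{-2}{d_1})$ and $\sum_{d_1\mid n_1}\mu(n_1/d_1)(\tfrac{-1}{d_1})$ equal, by Lemma~\ref{lem:2.2}, the products $A:=\prod_{i=1}^k\bigl((\tfrac{-2}{p_i})^{\alpha_i}-(\tfrac{-2}{p_i})^{\alpha_i-1}\bigr)$ and $B:=\prod_{i=1}^k\bigl((\tfrac{-1}{p_i})^{\alpha_i}-(\tfrac{-1}{p_i})^{\alpha_i-1}\bigr)$.

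Next I evaluate $A$ and $B$. Reading $(\tfrac{-1}{p})$ and $(\tfrac{-2}{p})$ off $p\bmod 8$ (as in the proof of Lemma~\ref{lem:2.1}), a factor of $A$ vanishes unless $p_i\equiv5,7\pmod 8$ and a factor of $B$ vanishes unless $p_i\equiv3,7\pmod 8$; when a factor does not vanish it equals $2(-1)^{\alpha_i}$. Hence $A\ne0$ precisely when $R_{\mathbb{P}_k}\subseteq\{5,7\}$, and then $A=(-1)^{\Omega(n_1)}2^{\omega(n_1)}$; similarly $B\ne0$ precisely when $R_{\mathbb{P}_k}\subseteq\{3,7\}$, and then $B=(-1)^{\Omega(n_1)}2^{\omega(n_1)}$. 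The condition ``$A\ne0$ or $B\ne0$'' therefore breaks into the mutually exclusive possibilities $R_{\mathbb{P}_k}\in\{\{5\},\{5,7\}\}$ (only $A$ survives), $R_{\mathbb{P}_k}\in\{\{3\},\{3,7\}\}$ (only $B$), $R_{\mathbb{P}_k}=\{7\}$ (both), and all other $R_{\mathbb{P}_k}$ (neither), which already matches the shape of the four cases in the statement.

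Finally I assemble the pieces for each $\alpha$, using $\varphi(n)=2^{\max(\alpha-1,0)}\varphi(n_1)$ and the translations $\Omega(n_1)=\Omega(n)-\alpha$, $\omega(n_1)=\omega(n)-\min(\alpha,1)$. For $\alpha\ge3$ no Jacobi symbol survives and the $\beta=\alpha-1,\alpha$ contributions combine to exactly $\tfrac18\varphi(n)$; for $\alpha=0$ one gets $\tfrac18\varphi(n)+\tfrac14A+\tfrac18B$; for $\alpha=1$, combining the $\beta=0,1$ terms gives $\tfrac18\varphi(n)-\tfrac14A+\tfrac18B$; and for $\alpha=2$, combining the $\beta=1,2$ terms gives $\tfrac18\varphi(n)-\tfrac14B$ (no $A$-term, since $\beta=0$ is excluded). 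Inserting the values of $A,B$ in each of the four residue regimes and rewriting via the translations above yields the four displayed formulas. The step I expect to require the most care is purely bookkeeping: verifying that the compact exponents $\Omega(n)-[\tfrac{\alpha+1}{2}]$, $\Omega(n)-[\tfrac{\alpha}{2}]$, $\omega(n)-\tfrac12(1-(-1)^{\alpha})$ and the coefficient $\tfrac14\bigl(1-[\tfrac{\alpha+1}{2}]\bigr)$ in the statement do reproduce, for each of $\alpha=0,1,2$, the three concrete numbers just computed — a short finite check, after which consistency with \eqref{eq:A0} in the ``otherwise'' branch is immediate.
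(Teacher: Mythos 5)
Your proposal is correct and takes essentially the same route as the paper: expand $\varphi_8(n)=\sum_{d\mid n}\mu(n/d)[d/8]$ sorted by the power of $2$ in $d$, evaluate the character-twisted sums with Lemma 2.2, and split into the same four regimes according to $R_{\mathbb{P}_k}$. The only (harmless) difference is that for $\alpha=1,2$ you evaluate the inner sums directly from the floor identities for $[m/4]$ and $[m/2]$, where the paper instead recognizes them as $\varphi_4(n_1)$ and $\varphi_2(n_1)$ and invokes Lemma 2.4; your intermediate expressions $\tfrac18\varphi(n)+\tfrac14A+\tfrac18B$, $\tfrac18\varphi(n)-\tfrac14A+\tfrac18B$, and $\tfrac18\varphi(n)-\tfrac14B$ agree with the paper's (8), (13), (14) in every residue class case.
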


\begin{proof} \quad For $n=2^{\alpha}\,\prod_{i=1}^{k} p_{i}^{\alpha_{i}}>8$, set $n_1 = \prod_{i=1}^{k} p_{i}^{\alpha_{i}}$, then $\gcd(n_{1}, 2)= 1$. There are 4 cases as follows.

\textbf{Case 1}. \quad For $\alpha= 0$, i.e., $n=n_{1} > 8$. By (1)-(2) and Lemmas 2.1-2.2, we have
\begin{eqnarray}\label{eq:n/8}
\varphi_{8}(n)&= &\sum_{d\,|\,n_{1}}\mu\Big(\frac{n_{1}}{d}\Big)\Big[\frac{d}{8}\Big]
=\frac{1}{8}\sum_{d\,|\,n_{1}}\mu\Big(\frac{n_{1}}{d}\Big)\Big(d-4+2\Big(\frac{-2}{d}\Big)+\Big(\frac{-1}{d}\Big)\Big)\\
&=&\frac{1}{8}\sum_{d\,|\,n_{1}}\mu\Big(\frac{n_{1}}{d}\Big)d-\frac{1}{2}\sum_{d\,|\,n_{1}}\mu\Big(\frac{n_{1}}{d}\Big)
+\frac{1}{4}\sum_{d\,|\,n_{1}}\mu\Big(\frac{n_{1}}{d}\Big)\Big(\frac{-2}{d}\Big)\nonumber\\
&&+\;\frac{1}{8}\sum_{d\,|\,n_{1}}\mu\Big(\frac{n_{1}}{d}\Big)\Big(\frac{-1}{d}\Big)\nonumber\\
&=&\frac{1}{8}\,\varphi(n_{1})+\frac{1}{4}\,\prod_{i=1}^{k}\Big(\Big(\frac{-2}{p_{i}}\Big)^{\alpha_{i}}-\Big(\frac{-2}{p_{i}}\Big)^{\alpha_{i}-1}\Big)
\nonumber\\
&&+\,\frac{1}{8}\,\prod_{i=1}^{k}\Big(\Big(\frac{-1}{p_{i}}\Big)^{\alpha_{i}}-\Big(\frac{-1}{p_{i}}\Big)^{\alpha_{i}-1}\Big).\nonumber
\end{eqnarray}

If $1\in R_{\mathbb{P}_{k}}$, i.e.,  there exists an $i(1\leq i\leq k)$ such that $p_{i}\equiv1 (\mathrm{mod}\, 8)$, and so $(\frac{-2}{p_{i}}) = (\frac{-1}{p_{i}}) =1$. Now by (8) we have
\begin{eqnarray}\label{eq:8n}
\varphi_{8}(n)=\frac{1}{8}\,\varphi(n_{1})=\frac{1}{8}\,\varphi(n).
\end{eqnarray}

If $\{3,5\}\subseteq R_{\mathbb{P}_{k}}$, i.e., there exist $i\neq j$ such that $p_{i}\equiv3(\mathrm{mod}\,8)$ and $p_{j}\equiv5(\mathrm{mod}\,8)$, which means that $(\frac{-2}{p_{i}}) = (\frac{-1}{p_{j}}) =1$. Thus by (8) we also have
\begin{eqnarray}\label{eq:8n}
\varphi_{8}(n)=\frac{1}{8}\,\varphi(n_{1})=\frac{1}{8}\,\varphi(n).\nonumber
\end{eqnarray}

If $R_{\mathbb{P}_{k}}=\{5, 7\}$ or $\{5\}$, i.e., for any $p\in\mathbb{P}_{k}$,  we have $p\equiv5, 7(\mathrm{mod}\,8)$ or $p\equiv5(\mathrm{mod}\,8)$, respectively. This means that there exists a prime $p$ such that $(\frac{-2}{p})=-1$ and $(\frac{-1}{p})=1$. Thus by (8) we can obtain
\begin{eqnarray}\label{eq:8n1}
\varphi_{8}(n)=\frac{1}{8}\,\varphi(n_{1})+\frac{1}{4}\,\prod_{i=1}^{k}\Big(2\cdot(-1)^{\alpha_{i}}\Big)
=\frac{1}{8}\,\varphi(n)+\frac{1}{4}\,(-1)^{\Omega(n)}\,2 \,^{\omega(n)}.
\end{eqnarray}

If $R_{\mathbb{P}_{k}}=\{3, 7\}$ or $\{3\}$, i.e., for any $p\in\mathbb{P}_{k},  p\equiv3, 7(\mathrm{mod}\,8)$ or $p\equiv3(\mathrm{mod}\,8)$, respectively. This implies that  for any $p\in\mathbb{P}_{k}, (\frac{-1}{p})=-1$, and there exists a prime $p'\in\mathbb{P}_{k}$ such that $p'\equiv3(\mathrm{mod}\,8)$ and then $(\frac{-2}{p' })=1$. Thus by (8) we have
\begin{eqnarray}\label{eq:8n2}
\varphi_{8}(n)=\frac{1}{8}\,\varphi(n_{1})+\frac{1}{8}\,\prod_{i=1}^{k}\Big(2\cdot(-1)^{\alpha_{i}}\Big)
=\frac{1}{8}\,\varphi(n)+\frac{1}{8}\,(-1)^{\Omega(n)}\,2 \,^{\omega(n)}.
\end{eqnarray}

If $R_{\mathbb{P}_{k}}=\{7\}$, i.e.,  for any $p\in\mathbb{P}_{k}, p\equiv7 (\mathrm{mod}\, 8)$, and so $(\frac{-2}{p})=(\frac{-1}{p})=-1$. Thus by (8) we have
\begin{eqnarray}\label{eq:8n3}
\varphi_{8}(n)=\frac{1}{8}\,\varphi(n_{1})+\frac{3}{8}\,\prod_{i=1}^{k}\Big(2\cdot(-1)^{\alpha_{i}}\Big)
=\frac{1}{8}\,\varphi(n)+\frac{3}{8}\,(-1)^{\Omega(n)}\,2\,^{\omega(n)}.
\end{eqnarray}

Now from (10)-(12) we know that Theorem 3.1 is true.

\textbf{Case 2}.\quad For $\alpha= 1$, i.e., $n=2n_{1}>8$. Then  from the definition we have
\begin{eqnarray}\label{eq:2.2}
\varphi_{8}(n)
&=&\sum_{d|n_{1}}\mu\Big(\frac{2n_{1}}{d}\Big)\Big[\frac{d}{8}\Big]+\sum_{d|n_{1}}\mu\Big(\frac{2n_{1}}{2d}\Big)\Big[\frac{2d}{8}\Big]\nonumber\\
&=&-\varphi_{8}(n_{1})+\varphi_{4}(n_{1}).\nonumber
\end{eqnarray}
Now by Lemma 2.4 and the proof for Case 1, we can  get
\begin{eqnarray}\label{eq:8n4}
\varphi_{8}(n)=\begin{cases}
\frac{1}{8}\,\varphi(n)+\frac{1}{4}\,(-1)^{\Omega(n)}\,2 \,^{\omega(n)-1},  & \mbox{if $R_{\mathbb{P}_{k}}=\{5, 7\}, \{5\}$};\\
\frac{1}{8}\,\varphi(n)+\frac{1}{8}\,(-1)^{\Omega(n)-1}\,2 \,^{\omega(n)-1},  & \mbox{if $R_{\mathbb{P}_{k}}=\{3, 7\}, \{3\}$};\\
\frac{1}{8}\,\varphi(n)+\frac{1}{8}\,(-1)^{\Omega(n)}\,2 \,^{\omega(n)-1},  & \mbox{if  $R_{\mathbb{P}_{k}}=\{7\}$};\\
\frac{1}{8}\,\varphi(n),  & \mbox{otherwise}.
 \end{cases}
\end{eqnarray}
This means that Theorem 3.1 is true in this case.

\textbf{Case 3}. \quad For $\alpha= 2$, i.e., $n=4n_{1}>8$. Then from the definition we have
 \begin{eqnarray}\label{eq:2.2}
\varphi_{8}(n)
&=&\sum_{d\,|\,n_{1}}\mu\Big(\frac{4n_{1}}{d}\Big)\Big[\frac{d}{8}\Big]+
\sum_{d\,|\,n_{1}}\mu\Big(\frac{4n_{1}}{2d}\Big)\Big[\frac{2d}{8}\Big]+
\sum_{d\,|\,n_{1}}\mu\Big(\frac{4n_{1}}{4d}\Big)\Big[\frac{4d}{8}\Big]\nonumber\\
&=&\sum_{d\,|\,n_{1}}\mu\Big(\frac{2n_{1}}{d}\Big)\Big[\frac{d}{4}\Big]+
\sum_{d\,|\,n_{1}}\mu\Big(\frac{n_{1}}{d}\Big)\Big[\frac{d}{2}\Big]\nonumber\\
&=&\varphi_{2}(n_{1})-\varphi_{4}(n_{1})=\frac{1}{2}\varphi(n_{1})-\varphi_{4}(n_{1}).\nonumber
\end{eqnarray}
Now from Lemma 2.4 and the proof for Case 1, we can also get
\begin{eqnarray}\label{eq:8n5}
\varphi_{8}(n)=\begin{cases}
\frac{1}{8}\,\varphi(n), & \mbox{if $R_{\mathbb{P}_{k}}=\{5, 7\}, \{5\}$},\\
\frac{1}{8}\,\varphi(n)+\frac{1}{8}\,(-1)^{\Omega(n)-1}\,2 \,^{\omega(n)},  & \mbox{if $R_{\mathbb{P}_{k}}=\{3, 7\}, \{3\}$};\\
\frac{1}{8}\,\varphi(n)+\frac{1}{8}\,(-1)^{\Omega(n)-1}\,2 \,^{\omega(n)},  & \mbox{if  $R_{\mathbb{P}_{k}}=\{7\}$};\\
\frac{1}{8}\,\varphi(n),  & \mbox{otherwise}.
 \end{cases}
\end{eqnarray}
This means that Theorem 3.1 holds in this case.

\textbf{Case 4}. \quad For $\alpha\geq 3$. Note that $\mu(2^{\gamma})=0$ for any positive integer $\gamma\geq 2$, thus by (1) and Lemma  2.4 we have
 \begin{eqnarray}\label{eq:8n6}
\varphi_{8}(n)
=\sum_{d\,|\,n_{1}}\mu\Big(\frac{2n_{1}}{d}\Big)\Big[\frac{2^{\alpha-1}d}{8}\Big]+
\sum_{d\,|\,n_{1}}\mu\Big(\frac{n_{1}}{d}\Big)\Big[\frac{2^{\alpha}d}{8}\Big].
\end{eqnarray}
If $\alpha=3$, then
$$\varphi_{8}(n)=-\sum_{d\,|\,n_{1}}\mu\Big(\frac{n_{1}}{d}\Big)\Big[\frac{d}{2}\Big]
+\sum_{d\,|\,n_{1}}\mu\Big(\frac{n_{1}}{d}\Big)d
=-\frac{1}{2}\varphi(n_1)+\varphi(n_1)=\frac{1}{2}\varphi(n_1)=\frac{1}{8}\,\varphi(n)$$
If $\alpha\geq4$, then
$\varphi_{8}(n)=-2^{\alpha-4}\varphi(n_{1})+2^{\alpha-3}\varphi(n_{1})
=2^{\alpha-4}\varphi(n_{1})=\frac{1}{8}\,\varphi(n)$,
which means that Theorem 3.1  holds also.

Form the above, we complete the proof of Theorem 3.1.
\end{proof}

\section{The Explicit  Formula  For $\varphi_{12}(n)$ }\label{sec:4}

In this section, we give the explicit formula for $\varphi_{12}(n)$. Obviously, $\varphi_{12}(n)=0$ when $n< 12$, and $\varphi_{12}(n)=1$ when $n=12$ or $24$, then we consider about $n> 12$ and $n\neq24$.

\begin{Thm}\label{thm:4.1}\quad Let $\alpha$ and $\beta$ be non-negative integers. If $n=2^{\alpha}\,3^{\beta}>12$ and $n\neq 24$, then
\begin{eqnarray}\label{eq:A2}
\;\;\;\; \;\;\;\;\varphi_{12}(n)=\begin{cases}
\frac{1}{2}\,(3^{\beta-2}-(-1)^{\alpha+\beta}),  &\mbox{if $\alpha=0,\,or\,\alpha\geq1,\beta\geq2$};\\
2^{\alpha-2}\cdot 3^{\beta-2}, &\mbox{if $\alpha\geq 2,\beta\geq 2$};\\
\frac{1}{3}\,(2^{\alpha+\beta-3}+(-1)^{\alpha+\beta}), &\mbox{if $\alpha\geq4,\beta=0,1$}.
 \end{cases}
\end{eqnarray}
\end{Thm}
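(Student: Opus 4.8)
The plan is to compute $\varphi_{12}(n)$ directly from the M\"{o}bius identity $\varphi_{12}(n)=\sum_{d\mid n}\mu(n/d)[d/12]$, organizing the argument by the exponent $\alpha$ exactly as in the proof of Theorem~3.1. Since $n=2^{\alpha}3^{\beta}$, every divisor is $d=2^{a}3^{b}$ and $\mu(n/d)=0$ unless $\alpha-a\le 1$ and $\beta-b\le 1$, so at most four terms survive in each case. When $\alpha=0$ we have $n=3^{\beta}$ with $\beta\ge 3$, and only $d=3^{\beta}$ and $d=3^{\beta-1}$ contribute: $\varphi_{12}(3^{\beta})=[3^{\beta}/12]-[3^{\beta-1}/12]$. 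Using $[3^{j}/12]=[3^{j-1}/4]$ for $j\ge 1$ (valid because $3\mid 3^{j}$), this equals $[3^{\beta-1}/4]-[3^{\beta-2}/4]=\varphi_{4}(3^{\beta-1})$, and since $3\equiv 3\pmod 4$ and $3^{\beta-1}>4$, Lemma~2.4 gives $\varphi_{12}(3^{\beta})=\frac12\bigl(3^{\beta-2}-(-1)^{\beta}\bigr)$.

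For $\alpha=1$, so $n=2\cdot 3^{\beta}$ with $\beta\ge 2$, I would split the divisors of $n$ into the odd ones ($d\mid 3^{\beta}$) and the even ones ($d=2d'$ with $d'\mid 3^{\beta}$). For odd $d$ one has $\mu(2\cdot 3^{\beta}/d)=-\mu(3^{\beta}/d)$, and for the even ones $[2d'/12]=[d'/6]$ together with $\mu(2\cdot 3^{\beta}/2d')=\mu(3^{\beta}/d')$; hence the two partial sums are $-\varphi_{12}(3^{\beta})$ and $\varphi_{6}(3^{\beta})$, so that $\varphi_{12}(2\cdot 3^{\beta})=\varphi_{6}(3^{\beta})-\varphi_{12}(3^{\beta})$. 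Here the $3$-exponent of $3^{\beta}$ is $\beta\ge 2$, which falls into the ``otherwise'' branch of Lemma~2.5, giving $\varphi_{6}(3^{\beta})=\frac16\varphi(3^{\beta})=3^{\beta-2}$; combined with the previous paragraph this yields $\varphi_{12}(2\cdot 3^{\beta})=\frac12\bigl(3^{\beta-2}+(-1)^{\beta}\bigr)$, the first formula at $\alpha=1$.

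For $\alpha\ge 2$ the same odd/even split works even more cleanly: if $d\mid 3^{\beta}$ is odd then $4\mid n/d$, so $\mu(n/d)=0$, while the even divisors $d=2d'$ (with $d'\mid 2^{\alpha-1}3^{\beta}$) contribute $\sum_{d'}\mu(2^{\alpha-1}3^{\beta}/d')[d'/6]=\varphi_{6}(2^{\alpha-1}3^{\beta})$. Thus $\varphi_{12}(2^{\alpha}3^{\beta})=\varphi_{6}(2^{\alpha-1}3^{\beta})$, and it remains to pick the right branch of Lemma~2.5 (or Lemma~2.6). If $\beta\ge 2$, then $2^{\alpha-1}3^{\beta}$ has $3$-exponent $\ge 2$, so the ``otherwise'' branch gives $\varphi_{6}(2^{\alpha-1}3^{\beta})=\frac16\varphi(2^{\alpha-1}3^{\beta})=2^{\alpha-2}3^{\beta-2}$, the second formula (when $\alpha\ge 3$ one may instead quote Lemma~2.6, since then $12\mid n$). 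If $\beta\in\{0,1\}$, then $n>12$ and $n\ne 24$ force $\alpha\ge 4$, so $2^{\alpha-1}3^{\beta}$ has $2$-exponent $\ge 3$ and lands in the third branch of Lemma~2.5, which after simplification gives $\varphi_{6}(2^{\alpha-1}3^{\beta})=\frac13\bigl(2^{\alpha+\beta-3}+(-1)^{\alpha+\beta}\bigr)$.

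Assembling these cases gives the asserted formula. What genuinely needs care, as opposed to routine simplification, is the bookkeeping: checking that the reduction $[3^{j}/12]=[3^{j-1}/4]$ is applied only for $j\ge 1$, and---the main pitfall---verifying that the hypotheses $n>12$ and $n\ne 24$, together with the given ranges of $\alpha,\beta$, always place $3^{\beta-1}$, respectively $2^{\alpha-1}3^{\beta}$, into the size range and into the specific sub-case of Lemmas~2.4--2.6 that we quote; in particular one must note that among the pairs with $\beta\le 1$ the hypotheses force $\alpha\ge 4$, and that the first two lines of the displayed formula overlap when $\alpha\ge 2,\ \beta\ge 2$, where the second line gives the correct value.
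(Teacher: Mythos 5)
Your proposal is correct, and for most of the cases it takes a genuinely different, more uniform route than the paper. The paper expands the M\"obius sum separately for $\alpha=0,1,2,3$ and $\alpha\geq4$ (with sub-cases $\beta=0,1,\geq2$) and evaluates the resulting floors $[3^{j}/4]$, $[2^{j}/3]$ by explicit congruence computations; you instead fold everything back into the stated lemmas: $\varphi_{12}(3^{\beta})=\varphi_{4}(3^{\beta-1})$ settled by Lemma 2.4, the same decomposition $\varphi_{12}(2\cdot3^{\beta})=\varphi_{6}(3^{\beta})-\varphi_{12}(3^{\beta})$ as the paper for $\alpha=1$, and, for all $\alpha\geq2$ at once, the single identity $\varphi_{12}(2^{\alpha}3^{\beta})=\varphi_{6}(2^{\alpha-1}3^{\beta})$ followed by Lemma 2.5 (or Lemma 2.6 when $\alpha\geq3$, $\beta\geq2$) --- an identity the paper itself only exploits later, in Case 4 of Theorem 4.2. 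Your route buys a shorter, case-light argument; its cost is that Lemma 2.5 must be applied to integers $2^{a}3^{b}$ having no prime factor exceeding $3$, i.e.\ with the condition ``$p_{i}\equiv5\ (\mathrm{mod}\ 6)$'' read vacuously (equivalently all $\alpha_{i}=0$); this reading is the one the paper itself uses when it cites Lemma 2.5 for $\varphi_{6}(3^{\beta})$, and the needed instances $\varphi_{6}(2^{\alpha-1})$ and $\varphi_{6}(3\cdot2^{\alpha-1})$ are immediate to check directly, so this is not a real gap, though it deserves one explicit sentence. Two further small points you flag but should nail down: in the $\alpha=1$ case with $\beta=2$ the input $\varphi_{12}(9)=0$ is not covered by your $\alpha=0$ paragraph (there $\beta\geq3$, and Lemma 2.4 would be applied to $\varphi_{4}(3)$, violating $n>4$), so a one-line direct verification is required --- the paper's own proof glosses the same point; and your observation that the first two lines of the displayed formula overlap for $\alpha\geq2$, $\beta\geq2$, with the second line giving the correct value, is consistent with the paper's computations, whose first line really corresponds to $\alpha\in\{0,1\}$.
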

\begin{proof}\quad
(1)For the case $\alpha=0$, i.e., $n=3^{\beta}>12$, and so $\beta\geq3$, then we have
\begin{eqnarray}\label{eq:2.1}
\varphi_{12}(3^{\beta})&=&\sum_{d\,|\,3^{\beta}}\mu\Big(\frac{3^{\beta}}{d}\Big)\Big[\frac{d}{12}\Big]
=\Big[\frac{3^{\beta}}{12}\Big]-\Big[\frac{3^{\beta-1}}{12}\Big]
=\Big[\frac{3^{\beta-1}}{4}\Big]-\Big[\frac{3^{\beta-2}}{4}\Big]\nonumber\\
&=& \frac{1}{4}\big(3^{\beta-1}-2+(-1)^{\beta-1}\big)-\frac{1}{4}\big(3^{\beta-2}-2+(-1)^{\beta-2}\big)  \nonumber \\
&=& \frac{1}{2}\big(3^{\beta-2}-(-1)^{\beta}\big).\nonumber
\end{eqnarray}

(2) For the case $\alpha=1$, i.e., $n=2\cdot3^{\beta}>12$, and so $\beta\geq2$, by Lemma 2.5,
\begin{eqnarray}\label{eq:2.1}
\varphi_{12}(2\cdot3^{\beta})&=&\sum_{d\,|\,3^{\beta}}\mu\Big(\frac{2\cdot3^{\beta}}{d}\Big)\Big[\frac{d}{12}\Big]
+\sum_{d\,|\,3^{\beta}}\mu\Big(\frac{2\cdot3^{\beta}}{2d}\Big)\Big[\frac{2d}{12}\Big]\nonumber\\
&=&-\varphi_{12}(3^{\beta})+\varphi_{6}(3^{\beta})
=-\frac{1}{12}\varphi(3^{\beta})+\frac{1}{2}\,(-1)^{\beta}+\frac{1}{6}\varphi(3^{\beta})\nonumber\\
&=&\frac{1}{2}\big(3^{\beta-2}-(-1)^{\beta+1}\big).\nonumber
\end{eqnarray}

(3) For the case $\alpha=2$, i.e.,$n=4\cdot 3^{\beta}>12$, and so $\beta\geq2$, then we have
\begin{eqnarray}\label{eq:2.1}
\varphi_{12}(4\cdot3^{\beta})&=&
\sum_{d\,|\,4\cdot3^{\beta}}\mu\Big(\frac{4\cdot3^{\beta}}{d}\Big)\Big[\frac{d}{12}\Big]\nonumber\\
&=&\mu(1)\Big[\frac{4\cdot3^{\beta}}{12}\Big]+
\mu(2)\Big[\frac{2\cdot3^{\beta}}{12}\Big]
+\mu(3)\Big[\frac{4\cdot3^{\beta-1}}{12}\Big]+\mu(6)\Big[\frac{2\cdot3^{\beta-1}}{12}\Big]\nonumber\\
&=&3^{\beta-1}-\Big[\frac{3^{\beta-1}}{2}\Big]
-3^{\beta-2}+\Big[\frac{3^{\beta-2}}{2}\Big]\nonumber\\
&=&3^{\beta-2}.\nonumber
\end{eqnarray}

(4) For the case $\alpha=3$, i.e.,$n=8\cdot 3^{\beta}>12$ and $n\neq24$, and so $\beta\geq2$. Then
\begin{eqnarray}\label{eq:2.1}
\varphi_{12}(8\cdot3^{\beta})&=&
\sum_{d\,|\,8\cdot3^{\beta}}\mu\Big(\frac{8\cdot3^{\beta}}{d}\Big)\Big[\frac{d}{12}\Big]\nonumber\\
&=&\mu(1)\Big[\frac{8\cdot3^{\beta}}{12}\Big]+
\mu(2)\Big[\frac{4\cdot3^{\beta}}{12}\Big]
+\mu(3)\Big[\frac{8\cdot3^{\beta-1}}{12}\Big]+\mu(6)\Big[\frac{4\cdot3^{\beta-1}}{12}\Big]\nonumber\\
&=&2\cdot3^{\beta-1}-3^{\beta-1}
-2\cdot3^{\beta-2}+3^{\beta-2}\nonumber\\
&=&2\cdot3^{\beta-2}.\nonumber
\end{eqnarray}

(5) For the case $\alpha\geq4$, i.e.,$n=2^{\alpha}\cdot 3^{\beta}>12$, and so $\beta\geq0$. If $\beta=0$, i.e.,$n=2^{\alpha}(\alpha\geq4)$, then we have
\begin{eqnarray}\label{eq:2.1}
\varphi_{12}(2^{\alpha})&=&\sum_{d\,|\,2^{\alpha}}\mu\Big(\frac{2^{\alpha}}{d}\Big)\Big[\frac{d}{12}\Big]
=\Big[\frac{2^{\alpha-2}}{3}\Big]-\Big[\frac{2^{\alpha-3}}{3}\Big]\nonumber\\
&=& \frac{1}{3}\Big(2^{\alpha-2}-\frac{1}{2}\big(3-(-1)^{\alpha-2}\big)\Big)
-\frac{1}{3}\Big(2^{\alpha-3}-\frac{1}{2}\big(3-(-1)^{\alpha-3}\big)\Big)  \nonumber\\
&=&\frac{1}{3}\Big(2^{\alpha-3}+(-1)^{\alpha}\Big)\nonumber
\end{eqnarray}
If $\beta=1$, i.e.,$n=3\cdot2^{\alpha}$, then we have
\begin{eqnarray}\label{eq:2.1}
\varphi_{12}(3\cdot2^{\alpha})&=&\sum_{d\,|\,3\cdot2^{\alpha}}\mu\Big(\frac{3\cdot2^{\alpha}}{d}\Big)\Big[\frac{d}{12}\Big]
=2^{\alpha-2}-2^{\alpha-3}-\Big[\frac{2^{\alpha-2}}{3}\Big]+\Big[\frac{2^{\alpha-3}}{3}\Big]\nonumber\\
&=&\frac{1}{3}\Big(2^{\alpha-2}+(-1)^{\alpha+1}\Big).\nonumber
\end{eqnarray}
If $\beta\geq2$, we have
\begin{eqnarray}\label{eq:2.1}
\varphi_{12}(2^{\alpha}\cdot3^{\beta})&=&
\sum_{d\,|\,2^{\alpha}\cdot3^{\beta}}\mu\Big(\frac{2^{\alpha}\cdot3^{\beta}}{d}\Big)\Big[\frac{d}{12}\Big]\nonumber\\
&=&\mu(1)\Big[\frac{2^{\alpha}\cdot3^{\beta}}{12}\Big]+
\mu(2)\Big[\frac{2^{\alpha-1}\cdot3^{\beta}}{12}\Big]
+\mu(3)\Big[\frac{2^{\alpha}\cdot3^{\beta-1}}{12}\Big]+\mu(6)\Big[\frac{2^{\alpha-1}\cdot3^{\beta-1}}{12}\Big]\nonumber\\
&=&2^{\alpha-2}\cdot3^{\beta-1}-\Big[\frac{2^{\alpha-2}\cdot3^{\beta-1}}{2}\Big]
-2^{\alpha-2}\cdot3^{\beta-2}+\Big[\frac{2^{\alpha-2}\cdot3^{\beta-2}}{2}\Big]\nonumber\\
&=&2^{\alpha-2}\cdot3^{\beta-2}.\nonumber
\end{eqnarray}


This completes the proof of Theorem 4.1.
\end{proof}

Now consider the case $n=2^{\alpha}\,3^{\beta}n_{1}$, where $n_{1}>1$ and $\gcd(n_{1}, 6)=1$. We have the following

\begin{Thm}\label{thm:4.2}\quad Let $\alpha$ and $\beta$ be non-negative integers, $k, \alpha_{i}(1\leq i\leq k)$ be positive integers, and $p_{1},\ldots, p_k$ be distinct primes. Suppose that $\gcd(p_{i}, 6)=1$$(1\leq i\leq k)$ and $n=2^{\alpha}\,3^{\beta}\,\prod_{i=1}^{k} p_{i}^{\alpha_{i}}>12$, then
\begin{eqnarray}\label{eq:A3}
\;\;\;\;\varphi_{12}(n)=\begin{cases}
\frac{1}{12}\,\varphi(n)+\frac{1}{4}\,(-1)^{\Omega(n)}\cdot 2 \,^{\omega(n)-\alpha},\\
 \;\;\;\; \;\;\;\; \;\;\;\; \;\;\;\;\;\mbox{if $\alpha=0, 1$, $\beta=0$, and $R_{\mathbb{P}_{k}}'=\{7, 11\}, \{7\}$};\\
 \frac{1}{12}\,\varphi(n)+\frac{1}{4}\,(-1)^{\Omega(n)+1}\cdot 2 \,^{\omega(n)-\alpha},\\
 \;\;\;\; \;\;\;\; \;\;\;\; \;\;\;\;\;\mbox{if $\alpha=0, 1$, $\beta\geq2$,  and $R_{\mathbb{P}_{k}}'=\{7, 11\}, \{7\}, \{11\}$};\\
\frac{1}{12}\,\varphi(n)+\frac{1}{6}\,(-1)^{\Omega(n)+[\frac{\alpha+1}{2}]}\cdot 2 \,^{\omega(n)-[\frac{\alpha+1}{2}]-\beta}, \\
\;\;\;\; \;\;\;\; \;\;\;\; \;\;\;\;\; \mbox{if $\alpha=0, 1, 2$, $\beta=0, 1$,  and $R_{\mathbb{P}_{k}}'=\{5, 11\}, \{5\}$},\\
\;\;\;\; \;\;\;\; \;\;\;\; \;\;\;\;\; \mbox{or $\alpha=0, 1$, $\beta=1$,  and $R_{\mathbb{P}_{k}}'=\{11\}$};\\
\;\;\;\; \;\;\;\; \;\;\;\; \;\;\;\;\; \mbox{or $\alpha=2$, $\beta=0, 1$,  and  $R_{\mathbb{P}_{k}}'=\{11\}$};\\
\frac{1}{12}\,\varphi(n)+\frac{1}{6}\,(-1)^{\Omega(n)}\cdot 2 \,^{\omega(n)-\beta}, \\
\;\;\;\; \;\;\;\; \;\;\;\; \;\;\;\; \; \mbox{if $\alpha\geq3$, $\beta=0, 1$,  and $R_{\mathbb{P}_{k}}'=\{5, 11\}, \{5\}, \{11\}$};\\
\frac{1}{12}\,\varphi(n)+\frac{5}{12}\,(-1)^{\Omega(n)}\cdot2\,^{\omega(n)},\\
\;\;\;\; \;\;\;\; \;\;\;\; \;\;\;\;\; \mbox{if $\alpha=0$, $\beta=0$,  and $R_{\mathbb{P}_{k}}'=\{11\}$};\\
\frac{1}{12}\,\varphi(n)+\frac{1}{12}\,(-1)^{\Omega(n)}\cdot2\,^{\omega(n)-1},\\
\;\;\;\; \;\;\;\; \;\;\;\; \;\;\;\;\; \mbox{if $\alpha=1$, $\beta=0$,  and  $R_{\mathbb{P}_{k}}'=\{11\}$};\\
\frac{1}{12}\,\varphi(n), \;\;\;\;  \mbox{otherwise}.
 \end{cases}
\end{eqnarray}
\end{Thm}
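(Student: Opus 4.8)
The plan is to follow the method of the proof of Theorem 3.1 (and of Theorem 4.1): write $n=2^{\alpha}3^{\beta}n_{1}$ with $\gcd(n_{1},6)=1$ and $n_{1}>1$, split according to the value of $\alpha$, then inside each branch split according to $\beta\in\{0,1,\ge2\}$ and according to which residues modulo $12$ occur among $p_{1},\dots,p_{k}$, i.e.\ according to $R_{\mathbb{P}_{k}}'$. The constant ingredients are: $\sum_{d\mid N}\mu(N/d)=0$ for $N>1$; Lemma 2.2 to evaluate twisted sums $\sum_{d\mid N}\mu(N/d)(\frac{a}{d})$, which vanish unless the relevant Jacobi symbol equals $-1$ at every $p_{i}$; and the dictionary $(\frac{-1}{p})=1\Leftrightarrow p\equiv1\ (\mathrm{mod}\ 4)$, $(\frac{-3}{p})=1\Leftrightarrow p\equiv1\ (\mathrm{mod}\ 3)$, which fixes the pair of symbols attached to each admissible class $1,5,7,11\ (\mathrm{mod}\ 12)$.

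For $\alpha=0$ I treat the sub-cases of $\beta$ in turn. If $\beta=0$, apply the formula for $[d/12]$ from Lemma 2.1 to $\varphi_{12}(n)=\sum_{d\mid n_{1}}\mu(n_{1}/d)[d/12]$ and invert term by term: the $d$-term and the constant give $\frac1{12}\varphi(n)$, while Lemma 2.2 turns the $(\frac{-1}{d})$- and $(\frac{-3}{d})$-sums into products that are nonzero only when all $p_{i}\equiv3\ (\mathrm{mod}\ 4)$ (that is $R_{\mathbb{P}_{k}}'\subseteq\{7,11\}$), resp.\ all $p_{i}\equiv2\ (\mathrm{mod}\ 3)$ (that is $R_{\mathbb{P}_{k}}'\subseteq\{5,11\}$); adding the weights $\frac14$ and $\frac16$ produces the coefficient $\frac5{12}$ exactly when $R_{\mathbb{P}_{k}}'=\{11\}$ and $\frac14$ or $\frac16$ in the two one-sided situations. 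For $\beta=1$ I use the reduction $\varphi_{12}(3n_{1})=\varphi_{4}(n_{1})-\varphi_{12}(n_{1})$, inserting Lemma 2.4 and the $\beta=0$ case just proved. For $\beta\ge2$ only the divisors $3^{\beta}e$ and $3^{\beta-1}e$ ($e\mid n_{1}$) have nonzero M\"obius value, so $\varphi_{12}(n)=\sum_{e\mid n_{1}}\mu(n_{1}/e)\bigl([3^{\beta-1}e/4]-[3^{\beta-2}e/4]\bigr)$; the identity $[m/4]=\frac14\bigl(m-2+(\frac{-1}{m})\bigr)$ collapses the bracket to $\frac12\bigl(3^{\beta-2}e+(-1)^{\beta-1}(\frac{-1}{e})\bigr)$, after which Lemma 2.2 finishes it.

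The remaining branches reduce to smaller moduli. Peeling one factor of $2$ from $m=3^{\beta}n_{1}$ gives $\varphi_{12}(2m)=\varphi_{6}(m)-\varphi_{12}(m)$; peeling two gives $\varphi_{12}(4m)=\varphi_{3}(m)-\varphi_{6}(m)$ (the $[e/12]$-term drops since $4\mid 4m/e$ makes $\mu(4m/e)=0$); and for $\alpha\ge3$ only $2^{\alpha}e$ and $2^{\alpha-1}e$ survive, so $\varphi_{12}(2^{\alpha}m)=\sum_{e\mid m}\mu(m/e)\bigl([2^{\alpha-2}e/3]-[2^{\alpha-3}e/3]\bigr)$, which through $[2^{t}e/3]=\frac{2^{t}e}{3}-\frac12+\frac{(-1)^{t}}{6}(\frac{e}{3})$ becomes $\frac1{12}\varphi(n)$ plus a term controlled by $\sum_{e\mid m}\mu(m/e)(\frac{e}{3})=\mu(3^{\beta})\prod_{i}\bigl((\frac{p_{i}}{3})^{\alpha_{i}}-(\frac{p_{i}}{3})^{\alpha_{i}-1}\bigr)$; the factor $\mu(3^{\beta})$ explains why this correction survives only for $\beta\in\{0,1\}$. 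Into $\varphi_{6}$ and $\varphi_{3}$ I substitute Lemmas 2.5 and 2.3 (with their own $2$- and $3$-adic sub-cases), and into the inner $\varphi_{12}(m)$ the $\alpha=0$ result.

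The only real obstacle is bookkeeping. For every admissible triple (value of $\alpha$; $\beta\in\{0,1,\ge2\}$; type of $R_{\mathbb{P}_{k}}'$ among $\{7,11\},\{7\},\{5,11\},\{5\},\{11\}$, together with the ``mixed'' classes that kill all corrections) one must rewrite $\varphi(m),\Omega(m),\omega(m)$ (and those of $n_{1}$) in terms of $\varphi(n),\Omega(n),\omega(n)$, carry the signs $(-1)^{\alpha}$ and $(-1)^{\beta}$ produced by $(\frac{-1}{2^{?}3^{?}})$, track the powers $2^{\omega(n)-\alpha}$, $2^{\omega(n)-[\frac{\alpha+1}{2}]-\beta}$, and so on, and verify that the many small expressions coalesce into exactly the seven displayed lines. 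In particular one checks that consecutive values of $\alpha$ merge into a single formula via the $2^{\omega(n)-\alpha}$ and $(-1)^{\Omega(n)-[\alpha/2]}$ patterns, that the cancellation between $\varphi_{4}(n_{1})$ and $\varphi_{12}(n_{1})$ at $\beta=1$ is exactly what pushes the classes $\{7,11\},\{7\}$ into the ``otherwise'' line while leaving $\{5,11\},\{5\},\{11\}$ with a nonzero correction, and that the exclusions $n>12$, $n\neq24$, $n_{1}>1$ are respected along every reduction.
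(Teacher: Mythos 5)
Your proposal is correct and follows essentially the same route as the paper: M\"obius inversion of $[d/12]$ via Lemmas 2.1--2.2 in the case $\alpha=0$, reduction to $\varphi_{2},\varphi_{3},\varphi_{4},\varphi_{6}$ and to the already-treated $\varphi_{12}$ of smaller arguments (Lemmas 2.3--2.6) for $\alpha\geq1$, and the same case analysis over $\alpha$, $\beta$ and $R_{\mathbb{P}_{k}}'$. The only differences are organizational --- you handle all $\beta$ at once through the valid identities $\varphi_{12}(2m)=\varphi_{6}(m)-\varphi_{12}(m)$, $\varphi_{12}(4m)=\varphi_{3}(m)-\varphi_{6}(m)$ and a direct floor-function computation for $\alpha\geq3$, where the paper splits $\beta=0,1,\geq2$ and uses separate (sometimes four-term) decompositions --- so what remains is exactly the bookkeeping the paper itself carries out.
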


\begin{proof}\quad  Set $n_1=\prod_{i=1}^{k} p_{i}^{\alpha_{i}}$, then $\gcd(n_{1}, 6) = 1$ and $n=2^{\alpha}\,3^{\beta}\,n_1$.

\textbf{Case 1}.  \quad For the case $\alpha= 0$.

\textbf{(A)} \quad If $\beta=0$, then $n_{1} > 1$. Thus by (1), (3)and Lemmas 2.1-2.2, we have
\begin{eqnarray}\label{eq:n/12}
\varphi_{12}(n)&= &\varphi_{12}(n_{1})=\sum_{d\,|\,n_{1}}\mu\Big(\frac{n_{1}}{d}\Big)\Big[\frac{d}{12}\Big]\\
&=&\frac{1}{12}\sum_{d\,|\,n_{1}}\mu\Big(\frac{n_{1}}{d}\Big)\Big(d-6+3\Big(\frac{-1}{d}\Big)+2\Big(\frac{-3}{d}\Big)\Big)\nonumber\\
&=&\frac{1}{12}\sum_{d\,|\,n_{1}}\mu\Big(\frac{n_{1}}{d}\Big)d-\frac{1}{2}\sum_{d\,|\,n_{1}}\mu\Big(\frac{n_{1}}{d}\Big)
+\frac{1}{4}\sum_{d\,|\,n_{1}}\mu\Big(\frac{n_{1}}{d}\Big)\Big(\frac{-1}{d}\Big)\nonumber\\
&&+\;\frac{1}{6}\sum_{d\,|\,n_{1}}\mu\Big(\frac{n_{1}}{d}\Big)\Big(\frac{-3}{d}\Big)\nonumber\\
&=&\frac{1}{12}\,\varphi(n_{1})+\frac{1}{4}\,\prod_{i=1}^{k}\Big(\Big(\frac{-1}{p_{i}}\Big)^{\alpha_{i}}-
\Big(\frac{-1}{p_{i}}\Big)^{\alpha_{i}-1}\Big)
\nonumber\\
&&+\,\frac{1}{6}\,\prod_{i=1}^{k}\Big(\Big(\frac{-3}{p_{i}}\Big)^{\alpha_{i}}-\Big(\frac{-3}{p_{i}}\Big)^{\alpha_{i}-1}\Big).\nonumber
\end{eqnarray}

If $1\in R_{\mathbb{P}_{k}}'$ or $\{5, 7\}\subseteq R_{\mathbb{P}_{k}}'$, then there exists  $p_{i}\equiv1 (\mathrm{mod}\, 12)$, or there exist $p_j$ and $p_l$ such that $p_{j}\equiv 5(\mathrm{mod}\, 12)$ and $p_{l}\equiv 7(\mathrm{mod}\, 12)$,  and then $(\frac{-1}{p_{i}}) = (\frac{-3}{p_{i}}) =1$, or $(\frac{-1}{p_{j}}) = (\frac{-3}{p_{l}}) =1$, respectively. Thus by (18) we can get
\begin{eqnarray}\label{eq:12n0}
\varphi_{12}(n)=\frac{1}{12}\,\varphi(n_{1})=\frac{1}{12}\,\varphi(n).
\end{eqnarray}

If $R_{\mathbb{P}_{k}}'=\{7, 11\}$ or $\{7\}$, i.e., for any $p\in\mathbb{P}_{k}$, we have $p\equiv7, 11(\mathrm{mod}\,12)$ or $p\equiv7(\mathrm{mod}\,12)$, respectively. This means that  $(\frac{-1}{p})=-1$, and there exists a prime $p'\equiv7(\mathrm{mod}\,12)$, i.e., $(\frac{-3}{p'})=1$, in either of the two cases. Thus by (18) we can obtain
\begin{eqnarray}\label{eq:12n1}
\varphi_{12}(n)=\frac{1}{12}\,\varphi(n_{1})+\frac{1}{4}\,\prod_{i=1}^{k}\Big(2\,(-1)^{\alpha_{i}}\Big)
=\frac{1}{12}\,\varphi(n)+\frac{1}{4}\,(-1)^{\Omega(n)}\,2^{\omega(n)}.
\end{eqnarray}

If $R_{\mathbb{P}_{k}}'=\{5, 11\}$ or $\{5\}$,  i.e., for any $p\in\mathbb{P}_{k}$,  $p\equiv5, 11(\mathrm{mod}\,12)$ or $p\equiv5(\mathrm{mod}\,12)$, respectively. Then $(\frac{-3}{p})=-1$, and there exists a prime $p'\equiv5(\mathrm{mod}\,12)$, i.e., $(\frac{-1}{p'})=1$ in either case. Thus by (18) we can get
\begin{eqnarray}\label{eq:12n2}
\varphi_{12}(n)=\frac{1}{12}\,\varphi(n_{1})+\frac{1}{6}\,\prod_{i=1}^{k}\Big(2\,(-1)^{\alpha_{i}}\Big)
=\frac{1}{12}\,\varphi(n)+\frac{1}{6}\,(-1)^{\Omega(n)}\,2^{\omega(n)}.
\end{eqnarray}

If $R_{\mathbb{P}_{k}}'=\{11\}$, i.e.,  for any $p\in \mathbb{P}_{k}$, $p\equiv11 (\mathrm{mod}\, 12)$, and then
 $(\frac{-1}{p})=(\frac{-3}{p})=-1$. Thus by (18) we have
\begin{eqnarray}\label{eq:12n3}
\;\;\;\varphi_{12}(n)=\frac{1}{12}\,\varphi(n_{1})+\frac{5}{12}\,\prod_{i=1}^{k}\Big(2\,(-1)^{\alpha_{i}}\Big)
=\frac{1}{12}\,\varphi(n)+\frac{5}{12}\,(-1)^{\Omega(n)}\,2\,^{\omega(n)}.
\end{eqnarray}

\textbf{(B)}\quad  If $\beta\geq1$,  then by (1) we have
\begin{eqnarray}\label{eq:12n00}
\varphi_{12}(n)
&=&\varphi_{12}(3^{\beta}n_{1})=\sum_{d\,|\,n_{1}}\mu\Big(\frac{3^{\beta}n_{1}}{d}\Big)\Big[\frac{d}{12}\Big]
+\sum_{d\,|\,3^{\beta-1}n_{1}}\mu\Big(\frac{3^{\beta}n_{1}}{3d}\Big)\Big[\frac{3d}{12}\Big]\nonumber\\
&=&\mu(3^{\beta})\,\varphi_{12}(n_{1})+\varphi_{4}(3^{\beta-1}n_{1}).\nonumber
\end{eqnarray}
Now from $\beta=1$, Lemma 2.4 and Case 1, we can get
\begin{eqnarray}\label{eq:12n01}
\varphi_{12}(n)&=&-\varphi_{12}(n_{1})+\varphi_{4}(n_{1})\\
&=&\begin{cases}
\frac{1}{12}\,\varphi(n),  & \mbox{if $R_{\mathbb{P}_{k}}'=\{7, 11\}, \{7\}$},\\
\frac{1}{12}\,\varphi(n)+\frac{1}{6}\,(-1)^{\Omega(n)}\,2 \,^{\omega(n)-1},  & \mbox{if $R_{\mathbb{P}_{k}}'=\{5, 11\}, \{5\}$},\\
\frac{1}{12}\,\varphi(n)+\frac{1}{6}\,(-1)^{\Omega(n)}\,2 \,^{\omega(n)-1},  & \mbox{if  $R_{\mathbb{P}_{k}}'=\{11\}$},\\
\frac{1}{12}\,\varphi(n),  & \mbox{otherwise}.
 \end{cases}\nonumber
\end{eqnarray}
For the case $\beta\geq2$, note that $\mu(3^{\gamma})=0$ with $\gamma\geq2$, thus by Lemma 2.4 we have
\begin{eqnarray}\label{eq:12n02}
\varphi_{12}(n)&=&\varphi_{4}(3^{\beta-1} n_{1})\\
&=&\begin{cases}
\frac{1}{12}\,\varphi(n)+\frac{1}{4}\,(-1)^{\Omega(n)+1}\,2 \,^{\omega(n)},  & \mbox{if $R_{\mathbb{P}_{k}}'=\{7, 11\}, \{7\}$},\\
\frac{1}{12}\,\varphi(n),  & \mbox{if $R_{\mathbb{P}_{k}}'=\{5, 11\}, \{5\}$},\\
\frac{1}{12}\,\varphi(n)+\frac{1}{4}\,(-1)^{\Omega(n)+1}\,2 \,^{\omega(n)},  & \mbox{if  $R_{\mathbb{P}_{k}}'=\{11\}$},\\
\frac{1}{12}\,\varphi(n),  & \mbox{otherwise}.
 \end{cases}\nonumber
\end{eqnarray}

From the above (18)-(24), Theorem 4.2 is proved in this case.

\textbf{Case 2}.\quad  For the case $\alpha= 1$.

 \textbf{(A)}\quad  If $\beta=0$, i.e., $n=2n_{1}$, then by (1), Case 1 and Lemma 2.4,  we have

 \begin{eqnarray}\label{eq:12n10}
\varphi_{12}(n)
&=&\sum_{d\,|\,n_{1}}\mu\Big(\frac{2n_{1}}{d}\Big)\Big[\frac{d}{12}\Big]+
\sum_{d\,|\,n_{1}}\mu\Big(\frac{2n_{1}}{2d}\Big)\Big[\frac{2d}{12}\Big]\\
&=&-\varphi_{12}(n_{1})+\varphi_{6}(n_{1})\nonumber\\
&=&\begin{cases}
\frac{1}{12}\,\varphi(n)+\frac{1}{4}\,(-1)^{\Omega(n)}\,2 \,^{\omega(n)-1},  & \mbox{if $R_{\mathbb{P}_{k}}'=\{7, 11\}, \{7\}$},\\
\frac{1}{12}\,\varphi(n)+\frac{1}{12}\,(-1)^{\Omega(n)+1}\,2 \,^{\omega(n)},  & \mbox{if $R_{\mathbb{P}_{k}}'=\{5, 11\}, \{5\}$},\\
\frac{1}{12}\,\varphi(n)+\frac{1}{12}\,(-1)^{\Omega(n)}\,2 \,^{\omega(n)-1},  & \mbox{if  $R_{\mathbb{P}_{k}}'=\{11\}$},\\
\frac{1}{12}\,\varphi(n),  & \mbox{otherwise}.
 \end{cases}\nonumber
\end{eqnarray}

\textbf{(B)}\quad If $\beta=1$, i.e., $n=6n_{1}$, then from (1) we can get
 \begin{eqnarray}\label{eq:12n11}
\varphi_{12}(n)
&=&\sum_{d\,|\,n_{1}}\mu\Big(\frac{6n_{1}}{d}\Big)\Big[\frac{d}{12}\Big]+
\sum_{d\,|\,n_{1}}\mu\Big(\frac{6n_{1}}{2d}\Big)\Big[\frac{2d}{12}\Big]+
\sum_{d\,|\,n_{1}}\mu\Big(\frac{6n_{1}}{3d}\Big)\Big[\frac{3d}{12}\Big]\nonumber\\
&& +
\sum_{d\,|\,n_{1}}\mu\Big(\frac{6n_{1}}{6d}\Big)\Big[\frac{6d}{12}\Big]\nonumber\\
&=&\varphi_{12}(n_{1})-\varphi_{6}(n_{1})-\varphi_{4}(n_{1})+\varphi_{2}(n_{1}).\nonumber
\end{eqnarray}
Now by Lemmas 2.4-2.5 and Case 1, we have
\begin{eqnarray}\label{eq:12n11}
\varphi_{12}(n)= \begin{cases}
\frac{1}{12}\,\varphi(n),  & \mbox{if $R_{\mathbb{P}_{k}}'=\{7, 11\}, \{7\}$},\\
\frac{1}{12}\,\varphi(n)+\frac{1}{12}\,(-1)^{\Omega(n)+1}\,2 \,^{\omega(n)-1},  & \mbox{if $R_{\mathbb{P}_{k}}'=\{5, 11\}, \{5\}$},\\
\frac{1}{12}\,\varphi(n)+\frac{1}{12}\,(-1)^{\Omega(n)+1}\,2 \,^{\omega(n)-1},  & \mbox{if  $R_{\mathbb{P}_{k}}'=\{11\}$},\\
\frac{1}{12}\,\varphi(n),  & \mbox{otherwise}.
 \end{cases}
\end{eqnarray}

\textbf{(C)}\quad If $\beta\geq 2$, then by (1) one can easily see that
 \begin{eqnarray}\label{eq:12n12}
\varphi_{12}(n)
&=&\sum_{d\,|\,n_{1}}\mu\Big(\frac{2\cdot3^{\beta}n_{1}}{d}\Big)\Big[\frac{d}{12}\Big]+
\sum_{d\,|\,n_{1}}\mu\Big(\frac{2\cdot3^{\beta}n_{1}}{2d}\Big)\Big[\frac{2d}{12}\Big]\nonumber\\
&&+\,\sum_{d\,|\,3^{\beta-1}n_{1}}\mu\Big(\frac{2\cdot3^{\beta} n_{1}}{3d}\Big)\Big[\frac{3d}{12}\Big]
+
\sum_{d\,|\,3^{\beta-1}n_{1}}\mu\Big(\frac{2\cdot3^{\beta}n_{1}}{6d}\Big)\Big[\frac{6d}{12}\Big]\nonumber\\
&=&-\varphi_{4}(3^{\beta-1}n_{1})+\varphi_{2}(3^{\beta-1}n_{1}).\nonumber
\end{eqnarray}
Now by Lemma 2.4 we can get
 \begin{eqnarray}\label{eq:12n12}
\varphi_{12}(n)
 =\begin{cases}
\frac{1}{12}\,\varphi(n)+\frac{1}{4}\,(-1)^{\Omega(n)+1}\,2 \,^{\omega(n)-1},  & \mbox{if $R_{\mathbb{P}_{k}}'=\{7, 11\}, \{7\}$},\\
\frac{1}{12}\,\varphi(n),  & \mbox{if $R_{\mathbb{P}_{k}}'=\{5, 11\}, \{5\}$},\\
\frac{1}{12}\,\varphi(n)+\frac{1}{4}\,(-1)^{\Omega(n)+1}\,2 \,^{\omega(n)-1},  & \mbox{if  $R_{\mathbb{P}_{k}}'=\{11\}$},\\
\frac{1}{12}\,\varphi(n),  & \mbox{otherwise}.
 \end{cases}
\end{eqnarray}

From the above (25)-(27), Theorem 4.2 is true in this case.

\textbf{Case 3}.\quad For the case $\alpha=2$.

 \textbf{(A)}\quad If $\beta=0$, i.e., $n=4n_{1}$, then from Lemmas 2.3 and 2.5, we can obtain
 \begin{eqnarray}\label{eq:12n20}
\varphi_{12}(n)&=&\sum_{d\,|\,4n_{1}}\mu\Big(\frac{4n_{1}}{d}\Big)\Big[\frac{d}{12}\Big]\\
&=&\sum_{d\,|\,n_{1}}\mu\Big(\frac{4n_{1}}{d}\Big)\Big[\frac{d}{12}\Big]+
\sum_{d\,|\,n_{1}}\mu\Big(\frac{4n_{1}}{2d}\Big)\Big[\frac{2d}{12}\Big]+
\sum_{d\,|\,n_{1}}\mu\Big(\frac{4n_{1}}{4d}\Big)\Big[\frac{4d}{12}\Big]\nonumber\\
&=&-\varphi_{6}(n_{1})+\varphi_{3}(n_{1})\nonumber\\
&=&\begin{cases}
\frac{1}{12}\,\varphi(n),  & \mbox{if $R_{\mathbb{P}_{k}}'=\{7, 11\}, \{7\}$},\\
\frac{1}{12}\,\varphi(n)+\frac{1}{12}\,(-1)^{\Omega(n)+1}\,2 \,^{\omega(n)},  & \mbox{if $R_{\mathbb{P}_{k}}'=\{5, 11\}, \{5\}$},\\
\frac{1}{12}\,\varphi(n)+\frac{1}{12}\,(-1)^{\Omega(n)+1}\,2 \,^{\omega(n)},  & \mbox{if  $R_{\mathbb{P}_{k}}'=\{11\}$},\\
\frac{1}{12}\,\varphi(n),  & \mbox{otherwise}.
 \end{cases}\nonumber
\end{eqnarray}

 \textbf{(B)}\quad If $\beta=1$, i.e., $n=12\,n_{1}$, then by the definition we have
 \begin{eqnarray}\label{eq:12n21}
\varphi_{12}(n)&=&\sum_{d|12n_{1}}\mu\Big(\frac{12n_{1}}{d}\Big)\Big[\frac{d}{12}\Big]\nonumber\\
&=&\sum_{d\,|\,n_{1}}\mu\Big(\frac{12 n_{1}}{d}\Big)\Big[\frac{d}{12}\Big]+
\sum_{d\,|\,n_{1}}\mu\Big(\frac{12 n_{1}}{2d}\Big)\Big[\frac{2d}{12}\Big]+
\sum_{d\,|\,n_{1}}\mu\Big(\frac{12 n_{1}}{4d}\Big)\Big[\frac{4d}{12}\Big]\nonumber\\
&& +
\sum_{d\,|\,n_{1}}\mu\Big(\frac{12 n_{1}}{3d}\Big)\Big[\frac{3d}{12}\Big]+
\sum_{d\,|\,n_{1}}\mu\Big(\frac{12 n_{1}}{6d}\Big)\Big[\frac{6d}{12}\Big]+
\sum_{d\,|\,n_{1}}\mu\Big(\frac{12 n_{1}}{12d}\Big)\Big[\frac{12 d}{12}\Big]\nonumber\\
&=&\varphi_{6}(n_{1})-\varphi_{3}(n_{1})-\varphi_{2}(n_{1})+\varphi(n_{1}).\nonumber
\end{eqnarray}
Now by Lemmas 2.3-2.4 and Case 1, we can get
\begin{eqnarray}\label{eq:12n21}
\varphi_{12}(n)= \begin{cases}
\frac{1}{12}\,\varphi(n),  & \mbox{if $R_{\mathbb{P}_{k}}'=\{7, 11\}, \{7\}$},\\
\frac{1}{12}\,\varphi(n)+\frac{1}{12}\,(-1)^{\Omega(n)+1}\,2 \,^{\omega(n)-1},  & \mbox{if $R_{\mathbb{P}_{k}}'=\{5, 11\}, \{5\}$},\\
\frac{1}{12}\,\varphi(n)+\frac{1}{12}\,(-1)^{\Omega(n)+1}\,2 \,^{\omega(n)-1},  & \mbox{if  $R_{\mathbb{P}_{k}}'=\{11\}$},\\
\frac{1}{12}\,\varphi(n),  & \mbox{otherwise}.
 \end{cases}
\end{eqnarray}

\textbf{(C)}\quad If $\beta\geq 2$, then from $n=4\cdot3^{\beta}\,n_{1}$ and the definition, we know that
 \begin{eqnarray}\label{eq:12n22}
\varphi_{12}(n)
&=&\sum_{d\,|\,4\cdot3^{\beta}n_{1}}\mu\Big(\frac{4\cdot3^{\beta}n_{1}}{d}\Big)\Big[\frac{d}{12}\Big]
=\sum_{d\,|\,2\cdot3^{\beta-1}n_{1}}\mu\Big(\frac{4\cdot3^{\beta}n_{1}}{6d}\Big)\Big[\frac{6d}{12}\Big]\nonumber\\
&=&\sum_{d\,|\,3^{\beta-1}n_{1}}\mu\Big(\frac{2\cdot3^{\beta}n_{1}}{2d}\Big)\Big[\frac{2d}{2}\Big]+
\sum_{d\,|\,3^{\beta-1}n_{1}}\mu\Big(\frac{2\cdot3^{\beta}n_{1}}{d}\Big)\Big[\frac{d}{2}\Big]\\
&=&\varphi(3^{\beta-1}n_{1})-\varphi_{2}(3^{\beta-1}n_{1})=\frac{1}{2}\varphi(3^{\beta-1}n_{1})\nonumber\\
&=&\frac{1}{12}\varphi(4\cdot3^{\beta}n_{1})=\frac{1}{12}\varphi(n).\nonumber
\end{eqnarray}

From the above (28)-(30), Theorem 4.2 is proved in this case.

\textbf{Case 4}. \quad For the case $\alpha\geq 3$.

\textbf{(A)}\quad If $\beta=0$, i.e., $n=2^{\alpha}\,n_{1}$,  then by Lemma 2.5 we have
 \begin{eqnarray}\label{eq:12n30}
\varphi_{12}(n)
&=&\sum_{d\,|\,n_{1}}\mu\Big(\frac{2^{\alpha}n_{1}}{d}\Big)\Big[\frac{d}{12}\Big]+
\sum_{d\,|\,2^{\alpha-1}n_{1}}\mu\Big(\frac{2^{\alpha}n_{1}}{2d}\Big)\Big[\frac{2d}{12}\Big]\\
&=&\varphi_{6}(2^{\alpha-1}n_{1})\nonumber\\
&=&\begin{cases}
\frac{1}{12}\,\varphi(n),  & \mbox{if $R_{\mathbb{P}_{k}}'=\{7, 11\}, \{7\}$},\\
\frac{1}{12}\,\varphi(n)+\frac{1}{6}\,(-1)^{\Omega(n)}\,2 \,^{\omega(n)},  & \mbox{if $R_{\mathbb{P}_{k}}'=\{5, 11\}, \{5\}$},\\
\frac{1}{12}\,\varphi(n)+\frac{1}{6}\,(-1)^{\Omega(n)}\,2 \,^{\omega(n)},  & \mbox{if  $R_{\mathbb{P}_{k}}'=\{11\}$},\\
\frac{1}{12}\,\varphi(n),  & \mbox{otherwise}.
 \end{cases}\nonumber
\end{eqnarray}

\textbf{(B)}\quad If $\beta=1$, i.e., $n=3\cdot2^{\alpha}\,n_{1}$, then by the definition we have
 \begin{eqnarray}\label{eq:12n31}
\varphi_{12}(n)
&=&\sum_{d\,|\,n_{1}}\mu\Big(\frac{3\cdot2^{\alpha} n_{1}}{d}\Big)\Big[\frac{d}{12}\Big]+
\sum_{d\,|\,2^{\alpha-1}n_{1}}\mu\Big(\frac{3\cdot2^{\alpha} n_{1}}{2d}\Big)\Big[\frac{2d}{12}\Big]\nonumber\\
&& +
\sum_{d\,|\,n_{1}}\mu\Big(\frac{3\cdot2^{\alpha} n_{1}}{3d}\Big)\Big[\frac{3d_{1}}{12}\Big]+
\sum_{d\,|\,2^{\alpha-1}n_{1}}\mu\Big(\frac{3\cdot2^{\alpha} n_{1}}{6d}\Big)\Big[\frac{6d}{12}\Big]\nonumber\\
&=&-\varphi_{6}(2^{\alpha-1}n_{1})+\varphi_{2}(2^{\alpha-1}n_{1})\nonumber\\
&=&\begin{cases}
\frac{1}{12}\,\varphi(n),  & \mbox{if $R_{\mathbb{P}_{k}}'=\{7, 11\}, \{7\}$},\\
\frac{1}{12}\,\varphi(n)+\frac{1}{12}\,(-1)^{\Omega(n)}\,2 \,^{\omega(n)},  & \mbox{if $R_{\mathbb{P}_{k}}'=\{5, 11\}, \{5\}$},\\
\frac{1}{12}\,\varphi(n)+\frac{1}{12}\,(-1)^{\Omega(n)}\,2 \,^{\omega(n)},  & \mbox{if  $R_{\mathbb{P}_{k}}'=\{11\}$},\\
\frac{1}{12}\,\varphi(n),  & \mbox{otherwise}.
 \end{cases}
\end{eqnarray}

\textbf{(C)}\quad If $\beta\geq 2$, then by Lemma 2.6 we can get
 \begin{eqnarray}\label{eq:12n32}
\varphi_{12}(n)=\frac{1}{12}\varphi(2^{\alpha}\cdot3^{\beta}n_{1})=\frac{1}{12}\varphi(n).
\end{eqnarray}

Now from (31)-(33), Theorem 4.2 is proved in this case.

From the above, we complete the proof for Theorem 4.2.
\end{proof}

\section{The Parity of The Generalized Euler Function $\varphi_{8}(n)$ and $\varphi_{12}(n)$}\label{sec:5}

Based on Theorem 3.1 and Theorems 4.1-4.2, this section gives the parity of  $\varphi_{8}(n)$ and $\varphi_{12}(n)$, respectively.

\begin{Thm}\label{thm:5.1}\quad If $n$ is a positive integer,  then $\varphi_{8}(n)$ is odd if and only if $n=8, 16$ or $n$ is given by the following table.
\begin{eqnarray*}
\begin{tabular}{|c |c| }\hline
$ n$ &\mbox{conditions}      \\ \hline
$p^{\alpha}$ & $p\equiv9, 15\,(\mathrm{mod}\, 16)$; $p\equiv3, 5(\mathrm{mod}\, 16)$, $2\,|\alpha$; $p\equiv11, 13(\mathrm{mod}\,16)$, $2\nmid\alpha$; \\ \hline
$2p^{\alpha}$ & $p\equiv7, 9\,(\mathrm{mod}\, 16)$; $p\equiv3, 13(\mathrm{mod}\, 16)$, $2\,|\alpha$; $p\equiv5, 11(\mathrm{mod}\,16)$, $2\nmid\alpha$;   \\ \hline
$4p^{\alpha}$ & $p\equiv3, 5\,(\mathrm{mod}\, 8)$;   \\ \hline
$8p^{\alpha}$ & $p\equiv3, 7\,(\mathrm{mod}\, 8)$;   \\ \hline
$p_{1}^{\alpha_{1}}p_{2}^{\alpha_{2}}$ & $p_{1}\equiv p_{2}\equiv3(\mathrm{mod}\, 8)$; $p_{1}\equiv p_{2}\equiv5(\mathrm{mod}\, 8)$; $p_{1}\equiv3(\mathrm{mod}\, 8), p_{2}\equiv5(\mathrm{mod}\, 8)$;  \\ \hline
$2p_{1}^{\alpha_{1}}p_{2}^{\alpha_{2}}$ & $p_{1}\equiv p_{2}\equiv3(\mathrm{mod}\, 8)$; $p_{1}\equiv p_{2}\equiv5(\mathrm{mod}\, 8)$; $p_{1}\equiv3(\mathrm{mod}\, 8), p_{2}\equiv5(\mathrm{mod}\, 8)$.   \\ \hline
\end{tabular} \nonumber
\end{eqnarray*}
Where $p, p_{1}, p_{2}$ are odd primes with $p_1\neq p_2$, and $\alpha, \alpha_{1},\alpha_{2}$ are positive integers.
\end{Thm}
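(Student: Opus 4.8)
The strategy is to reduce the parity question for $\varphi_{8}(n)$ entirely to the explicit formulas already in hand: Theorem~3.1 for $n=2^{\alpha}\prod p_{i}^{\alpha_{i}}>8$ with at least one odd prime, together with formula~(6) for the pure power $n=2^{\alpha}$, plus direct inspection of the finitely many $n\le 8$. First I would dispose of $n\le 16$ by hand: from (6), $\varphi_{8}(2^{\alpha})$ is odd only for $\alpha=3,4$ (giving $n=8,16$), and one checks $\varphi_{8}(n)=0$ for $n<8$; this accounts for the sporadic entries $n=8,16$. For every remaining $n$ I would write $\varphi_{8}(n)=\frac18\varphi(n)+E(n)$, where $E(n)$ is the (possibly zero) correction term from Theorem~3.1, and analyze the $2$-adic valuation of each piece.

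The heart of the argument is the observation that for $n$ with $\omega(n)=k$ odd primes (and $2$-part $2^{\alpha}$), $\varphi(n)=2^{\alpha-1}\prod p_{i}^{\alpha_{i}-1}(p_{i}-1)$ when $\alpha\ge 1$ and $\varphi(n)=\prod p_{i}^{\alpha_{i}-1}(p_{i}-1)$ when $\alpha=0$; since each $p_{i}-1$ is even, $v_{2}(\varphi(n))\ge k+\max(\alpha-1,0)$, so $v_{2}(\tfrac18\varphi(n))\ge k+\max(\alpha-1,0)-3$. Meanwhile the correction term $E(n)$, in every branch of (7), has the shape $c\cdot(-1)^{\ast}2^{\omega(n)-t}$ with $t\in\{0,1\}$ or (in the $R_{\mathbb P_k}=\{7\}$ branch) a sum of two such terms, and $c\in\{\tfrac14,\tfrac18,\ldots\}$; so $v_{2}(E(n))$ is typically $k-3$, $k-2$, or similar. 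Thus $\varphi_{8}(n)$ is odd exactly when the \emph{smaller} of $v_{2}(\tfrac18\varphi(n))$ and $v_{2}(E(n))$ equals $0$ and is attained strictly (or both equal $0$ and their sum is odd). Carrying this out forces $k\le 2$ and pins down $\alpha\le 3$, which is why the table only lists $n=p^{\alpha},2p^{\alpha},4p^{\alpha},8p^{\alpha},p_1^{\alpha_1}p_2^{\alpha_2},2p_1^{\alpha_1}p_2^{\alpha_2}$. For each such shape I would substitute the matching case of Theorem~3.1, compute $v_{2}$ of both summands as a function of the residues $p_i\bmod 16$ (note $p\equiv r\pmod{16}$ controls $v_2(p-1)$: it is $1$ if $r\equiv 3\pmod4$ loosely speaking, $\ge 2$ if $r\equiv 1\pmod 4$, etc.) and of the exponents $\alpha_i\bmod 2$ (because $(\tfrac{-1}{p})^{\alpha}$ and $(\tfrac{-2}{p})^{\alpha}$ depend only on the parity of $\alpha$), and read off when the total is odd. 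This residue-and-parity bookkeeping is exactly what produces conditions like ``$p\equiv 9,15\pmod{16}$'' versus ``$p\equiv 3,5\pmod{16}$, $2\mid\alpha$''.

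The main obstacle is purely organizational: there are many branches (which residue class mod $8$ the $p_i$ lie in, hence which case of Theorem~3.1 applies; whether $\alpha$ is $0,1,2$ or $\ge 3$; for $\omega(n)=2$ which of the $\binom{}{}$ pairs of residue classes occurs) and in each one must compare two $2$-adic valuations and track an off-by-one in the exponent of $2$. The one genuinely delicate branch is $R_{\mathbb P_k}=\{7\}$ with $\omega(n)=1$, where (7) gives $E(n)$ as a \emph{sum} $\tfrac18(-1)^{\ast}2^{\omega-\ast}+\tfrac{1-[\frac{\alpha+1}{2}]}{4}(-1)^{\Omega}2^{\omega}$ that can partially cancel; there one must expand carefully for $\alpha=0,1,2$ separately, which is where the $4p^{\alpha}$ and $8p^{\alpha}$ rows with ``$p\equiv 3,7\pmod 8$'' come from. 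After handling each shape I would collect the surviving cases into the stated table and note that no other $n$ yields an odd value because in all other configurations one summand has strictly positive $v_2$ and the other has $v_2\ge 1$, completing the proof.
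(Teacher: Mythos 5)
Your plan follows essentially the same route as the paper's proof: handle $n=2^{\alpha}$ via formula (6), otherwise plug in the explicit formula (7) of Theorem 3.1, use the size of the correction term $2^{\omega(n)-t}$ against $v_{2}\big(\tfrac18\varphi(n)\big)$ to force $k\le 2$ and $\alpha\le 3$, and then decide parity by residues of the $p_i$ modulo $16$ together with the parity of the exponents $\alpha_i$. Two small presentation slips do not affect the method: when the two summands have \emph{equal} $2$-adic valuation the parity is decided by the mod $16$ (or mod $8$) computation you describe, not by a valuation comparison alone (and the parenthetical ``both equal $0$ and their sum is odd'' cannot occur); also the $8p^{\alpha}$, $p\equiv 3,7\ (\mathrm{mod}\ 8)$ row arises from the correction-free branch $\alpha\ge 3$ of (7), not from the $R_{\mathbb{P}_{k}}=\{7\}$ correction term.
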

\begin{proof}\quad For $n=2^{\alpha}$, by (6) we know that $\varphi_{8}(n)$ is odd if and only if $n=8, 16$.

Now suppose that $n=2^{\alpha}\,\prod_{i=1}^{k} p_{i}^{\alpha_{i}}$, where $\alpha \geq0$, $\alpha_{1},\ldots, \alpha_k$ are positive integers, and $p_{1},\ldots, p_k$ are distinct odd primes.
Set $n_{1}=\prod_{i=1}^{k} p_{i}^{\alpha_{i}}$, then $n_{1}>1$ is odd. By Theorem 3.1, we have the following four cases.

\textbf{Case 1}.\quad For the case $R_{\mathbb{P}_{k}}=\{5, 7\}$ or $\{5\}$.

\textbf{(A)}\quad If $\alpha=0$, i.e., $n=n_1$ is odd. By (7) we have $\varphi_{8}(n)=\frac{1}{8}\varphi(n)+\frac{1}{4}(-1)^{\Omega(n)}2 ^{\omega(n)}$. Note that there exists a prime factor $p$ of $n$ such that $p\equiv5 (\mathrm{mod}\, 8)$, and so we must have $\omega(n)\leq2$ if $\varphi_{8}(n)$ is odd. For $\omega(n)=2$, i.e., $n=p_{1}^{\alpha_{1}}p_{2}^{\alpha_{2}}$,  by (7) we have
\begin{eqnarray}\label{eq:2.1}
\varphi_{8}(n)=\frac{1}{8}\,p_{1}^{\alpha_{1}-1}(p_{1}-1)\,p_{2}^{\alpha_{2}-1}(p_{2}-1)+(-1)^{\alpha_{1}+\alpha_{2}}.\nonumber
\end{eqnarray}
Therefore $\varphi_{8}(n)$ is odd if and only if $p_{1}\equiv p_{2}\equiv5(\mathrm{mod}\, 8)$, which is true. Now for $\omega(n)=1$, i.e., $n=p_1^{\alpha_1}$ with $p_1\equiv 5(\mathrm{mod}\, 8)$, similarly, by (7) we have
\begin{eqnarray}\label{eq:2.1}
\varphi_{8}(n)=\frac{1}{8}\,p_1^{\alpha_{1}-1}(p_{1}-1)+\frac{1}{2}(-1)^{\alpha_{1}}
=\frac{1}{8}\,\big(p_{1}^{\alpha_{1}-1}(p_{1}-1)+4\cdot(-1)^{\alpha_{1}}\big).\nonumber
\end{eqnarray}
From $p_{1}\equiv5(\mathrm{mod}\, 8)$, we have $p_{1}\equiv5, 13(\mathrm{mod}\, 16)$. If $p_{1}\equiv5(\mathrm{mod}\, 16)$, then $$p_{1}^{\alpha_{1}-1}(p_{1}-1)+4(-1)^{\alpha_{1}}\equiv4\cdot5^{\alpha_{1}-1}+4(-1)^{\alpha_{1}}(\mathrm{mod}\, 16),$$ and so $\varphi_{8}(n)$ is odd if and only if $2\mid\alpha_{1}$. If $p_{1}\equiv13(\mathrm{mod}\, 16)$, then $$p_{1}^{\alpha_{1}-1}(p_{1}-1)+4(-1)^{\alpha_{1}}\equiv12\cdot(-3)^{\alpha_{1}-1}+4(-1)^{\alpha_{1}}(\mathrm{mod}\, 16),$$
thus $\varphi_{8}(n)$ is odd if and only if $\alpha_{1}$ is odd.

\textbf{(B)}\quad If $\alpha=1$, i.e., $\omega(n)\geq 2$, by (7) we have $\varphi_{8}(n)=\frac{1}{8}\varphi(n)+\frac{1}{4}(-1)^{\Omega(n)}2 ^{\omega(n)-1}$. Then we must have $\omega(n)\leq3$ if $\varphi_{8}(n)$ is odd. For $\omega(n)=3$, namely, $n=2p_{1}^{\alpha_{1}}p_{2}^{\alpha_{2}}$, using the same
method as (A), $\varphi_{8}(n)$ is odd if and only if $p_{1}\equiv p_{2}\equiv5(\mathrm{mod}\, 8)$. Now for $\omega(n)=2$, i.e., $n=2p_{1}^{\alpha_{1}}$ with $p_{1}\equiv5(\mathrm{mod}\, 8)$. Similar to (A), $\varphi_{8}(n)$ is odd if and only if $p_{1}\equiv5(\mathrm{mod}\, 16)$ and $\alpha_{1}$ is odd, or $p_{1}\equiv13(\mathrm{mod}\, 16)$ and $2\mid \alpha_{1}$ .

\textbf{(C)}\quad If $\alpha=2$, i.e., $\omega(n)\geq2$, then by (7) we have $\varphi_{8}(n)=\frac{1}{8}\varphi(n)=\frac{1}{4}\prod _{i=1}^{k} p_{i}^{\alpha_{i}-1}(p_{i}-1)$. Thus from the assumption $p_{i}\equiv 5, 7(\mathrm{mod}\, 8)$ or $p_{i}\equiv 5(\mathrm{mod}\, 8)$,  we know that $\omega(n)=2$ if $\varphi_{8}(n)$ is odd. In this case, $n=4p_{1}^{\alpha_{1}}$ with $p_{1}\equiv5(\mathrm{mod}\, 8)$, then
$p_{1}^{\alpha_{1}-1}(p_{1}-1)\equiv4(\mathrm{mod}\, 8)$, namely, $\varphi_{8}(n)$ is odd.

\textbf{(D)}\quad If $\alpha\geq3$, then by (7), $\varphi_{8}(n)=\frac{1}{8}\varphi(n)=2^{\alpha-4}\prod _{i=1} ^{k} p_{i}^{\alpha_{i}-1}(p_{i}-1)$. Thus we must have $\alpha=3$ and $k=1$ if
  $\varphi_{8}(n)$ is odd, namely, $n=8p_1^{\alpha_1}$ with $p_{1}\equiv 5(\mathrm{mod}\, 8)$. In this case, $\varphi_{8}(n)=\frac{1}{8}\varphi(n)=\frac{1}{2}p_{1}^{\alpha_{1}-1}(p_{1}-1)$ is always even.

\textbf{Case 2}. \quad For the case $R_{\mathbb{P}_{k}}=\{3, 7\}$ or $ \{3\}$.

\textbf{(A)}\quad If $\alpha=0$, by (7) we have $\varphi_{8}(n)=\frac{1}{8}\varphi(n)+\frac{1}{8}(-1)^{\Omega(n)}2 ^{\omega(n)}$. Thus we must have $\omega(n)\leq 3$ if $\varphi_{8}(n)$ is odd. For the case $\omega(n)=3$, i.e, $n=p_1^{\alpha_1}p_2^{\alpha_2}p_3^{\alpha_3}$, where $p_i\equiv 3(\mathrm{mod}\, 4) (i=1,2,3)$. Easily $\varphi_{8}(n)$ is always even in this case. Therefore we must have $\omega(n)=1,2$. For $\omega(n)=2$, i.e., $n=p_{1}^{\alpha_{1}}p_{2}^{\alpha_{2}}$. Note that $R_{\mathbb{P}_{k}}=\{3, 7\}$ or $ \{3\}$, then by (7),
$\varphi_{8}(n)=\frac{1}{8}\big(p_{1}^{\alpha_{1}-1}(p_{1}-1)\,p_{2}^{\alpha_{2}-1}(p_{2}-1)+4\cdot(-1)^{\alpha_{1}+\alpha_{2}}\big)$ is odd if and only if $p_1\equiv p_2\equiv3(\mathrm{mod}\, 8)$. Now for $\omega(n)=1$, i.e., $n=p_{1}^{\alpha_{1}}$ with $p_{1}\equiv3(\mathrm{mod}\, 8)$, then by (7) we have $\varphi_{8}(n)=\frac{1}{8}\big(p_{1}^{\alpha_{1}-1}(p_{1}-1)+2(-1)^{\alpha_{1}}\big).$ And so $\varphi_{8}(n)$ is odd if and only if
$p_{1}\equiv 3(\mathrm{mod}\, 16)$ and $2\mid\alpha_{1}$, or $p_{1}\equiv 11(\mathrm{mod}\, 16)$ and $\alpha_{1}$ is odd.

\textbf{(B)}\quad If $\alpha=1$, i.e., $\omega\geq2$, by (7) we have $\varphi_{8}(n)=\frac{1}{8}\varphi(n)+\frac{1}{8}(-1)^{\Omega(n)-1}2 ^{\omega(n)-1}$. Thus
we must have $\omega(n)\leq 3$ if $\varphi_{8}(n)$ is odd. Using the same method as (A) in case 1, we can get $\varphi_{8}(n)$ is odd if and only if $n=2p_{1}^{\alpha_{1}}p_{2}^{\alpha_{2}}$ with $p_{1}\equiv p_{2}\equiv3(\mathrm{mod}\, 8)$, or $n=2p_{1}^{\alpha_{1}}$ with $p_{1}\equiv3(\mathrm{mod}\, 16)$ and $2\mid \alpha_{1}$, or $p_{1}\equiv11(\mathrm{mod}\, 16)$ and $\alpha_{1}$ is odd.

\textbf{(C)}\quad If $\alpha=2$, i.e., $\omega(n)\geq2$, by (7) we have $\varphi_{8}(n)=\frac{1}{8}\varphi(n)+\frac{1}{8}\,(-1)^{\Omega(n)-1}\,2 \,^{\omega(n)}$.
Therefore we must have $\omega(n)\leq3$ if $\varphi_{8}(n)$ is odd. For the case $\omega(n)=3$, i.e., $n=4p_{1}^{\alpha_{1}}p_{2}^{\alpha_{2}}$,
we know that
$$\varphi_{8}(n)=\frac{1}{4}p_{1}^{\alpha_{1}-1}(p_1-1)p_{2}^{\alpha_{2}-1}(p_2-1)+(-1)^{\alpha_{1}+\alpha_{2}+1},$$ which is always even.
Now for the case $\omega(n)=2$, i.e., $n=4p_{1}^{\alpha_{1}}$ with $p_{1}\equiv 3(\mathrm{mod}\, 8)$, by (7) we have $\varphi_{8}(n)=\frac{1}{4}(p_{1}^{\alpha_{1}-1}(p_1-1)+2(-1)^{\alpha_{1}+1})$. Since $$p_{1}^{\alpha_{1}-1}(p_1-1)+2(-1)^{\alpha_{1}+1}\equiv2\cdot3^{\alpha_{1}-1}+2(-1)^{\alpha_{1}+1}\equiv 4(\mathrm{mod}\, 8), $$
 and so $\varphi_{8}(n)$ is odd.

\textbf{(D)}\quad If $\alpha\geq3$, by (7) we have $\varphi_{8}(n)=\frac{1}{8}\varphi(n)=2^{\alpha-4}\prod _{i=1} ^{k} p_{i}^{\alpha_{i}-1}(p_{i}-1)$.
From $R_{\mathbb{P}_{k}}=\{3, 7\}$ or $ \{3\}$, we must have $\alpha=3$ and $k=1$ if $\varphi_{8}(n)$ is odd, namely, $n=8p_{1}^{\alpha_{1}}$ with  $p_{1}\equiv3(\mathrm{mod}\, 8)$. Obviously,
$\varphi_{8}(n)= \frac{1}{2}\,p_{1}^{\alpha_{1}-1}(p_{1}-1)$ is odd in this case.

\textbf{Case 3}.\quad For the case $R_{\mathbb{P}_{k}}=\{7\}$.

\textbf{(A)}\quad  If $\alpha=0$, by (7), $\varphi_{8}(n)=\frac{1}{8}\varphi(n)+\frac{3}{8}\,(-1)^{\Omega(n)}\,2 \,^{\omega(n)}$.
 Then we must have $\omega(n)\leq2$ if $\varphi_{8}(n)$ is odd. For $\omega(n)=2$, i.e., $n=p_{1}^{\alpha_{1}}p_{2}^{\alpha_{2}}$, and then
$$
\varphi_{8}(n)=\frac{1}{2}\Big(p_{1}^{\alpha_{1}-1}p_{2}^{\alpha_{2}-1}\cdot\frac{p_{1}-1}{2}\cdot\frac{p_{2}-1}{2}
+3\cdot(-1)^{\alpha_{1}+\alpha_{2}}\Big).
$$
Since $p_{1}\equiv p_{2}\equiv7(\mathrm{mod}\, 8)$, we have $\frac{p_{1}-1}{2}\cdot\frac{p_{2}-1}{2}\equiv1(\mathrm{mod}\, 4)$
and
$$p_{1}^{\alpha_{1}-1}p_{2}^{\alpha_{2}-1}\cdot\frac{p_{1}-1}{2}\cdot\frac{p_{2}-1}{2}
+3\cdot(-1)^{\alpha_{1}+\alpha_{2}}
\equiv  (-1)^{\alpha_{1}+\alpha_{2}-2}+3\cdot(-1)^{\alpha_{1}+\alpha_{2}}
\equiv  0(\mathrm{mod}\, 4),$$
which means that $\varphi_{8}(n)$ is even. Now for $\omega(n)=1$, i.e., $n=p_{1}^{\alpha_{1}}$, by (7) we have $$\varphi_{8}(n)=\frac{1}{4}\Big(p_{1}^{\alpha_{1}-1}\cdot\frac{p_{1}-1}{2}+3\cdot(-1)^{\alpha_{1}}\Big).$$ Now from $p_{1}\equiv7(\mathrm{mod}\, 8)$, we have
 $p_{1}\equiv7, 15(\mathrm{mod}\, 16)$.
If  $p_{1}\equiv7(\mathrm{mod}\, 16)$,  then $$p_{1}^{\alpha_{1}-1}\cdot\frac{p_{1}-1}{2}+3\cdot(-1)^{\alpha_{1}}\equiv3\cdot(-1)^{\alpha_{1}-1}+3\cdot(-1)^{\alpha_{1}}\equiv0(\mathrm{mod}\, 8), $$  namely, $\varphi_{8}(n)$ is even. And so $p_{1}\equiv15(\mathrm{mod}\, 16)$, then $$p_{1}^{\alpha_{1}-1}\cdot\frac{p_{1}-1}{2}+3\cdot(-1)^{\alpha_{1}}\equiv7\cdot(-1)^{\alpha_{1}-1}+3\cdot(-1)^{\alpha_{1}}\equiv4(\mathrm{mod}\, 8), $$  namely, $\varphi_{8}(n)$ is odd.

\textbf{(B)}\quad  If  $\alpha=1$, by (7), $\varphi_{8}(n)=\frac{1}{8}\varphi(n)+\frac{1}{8}(-1)^{\Omega(n)}2\, ^{\omega(n)-1}$. Using the similar proof for (A) in case 1, $\varphi_{8}(n)$ is odd if and only if $n=2p_{1}^{\alpha_{1}}$ and $p_{1}\equiv7(\mathrm{mod}\, 16)$.

\textbf{(C)}\quad  If  $\alpha=2$, i.e., $\omega(n)\geq2$, by (7), $\varphi_{8}(n)=\frac{1}{8}\varphi(n)+\frac{1}{8}\,(-1)^{\Omega(n)-1}\,2 \,^{\omega(n)}$.
Then we must have $\omega(n)\leq3$ if $\varphi_{8}(n)$ is odd.
For $\omega(n)=3$, i.e., $n=4p_{1}^{\alpha_{1}}p_{2}^{\alpha_{2}}$ with $p_{1}\equiv p_{2}\equiv7(\mathrm{mod}\, 8)$, we know that
$$\varphi_{8}(n)=p_{1}^{\alpha_{1}-1}p_{2}^{\alpha_{2}-1}\cdot\frac{p_{1}-1}{2}\cdot\frac{p_{2}-1}{2}
+(-1)^{\alpha_{1}+\alpha_{2}-1}$$
 is always even. Now for $\omega(n)=2$, i.e., $n=4p_{1}^{\alpha_{1}}$ with $p_{1}\equiv7(\mathrm{mod}\, 8)$, we can verify that
 $$\varphi_{8}(n)=\frac{1}{2}\Big(p_{1}^{\alpha_{1}-1}\cdot\frac{p_{1}-1}{2}+(-1)^{\alpha_{1}-1}\Big)$$ is also even.

\textbf{(D)}\quad If  $\alpha\geq3$, by (7), $\varphi_{8}(n)=\frac{1}{8}\varphi(n)=2^{\alpha-4}\prod _{i=1} ^{k} p_{i}^{\alpha_{i}-1}(p_{i}-1)$. Hence, by $R_{\mathbb{P}_{k}}=\{7\}$ we know that $\varphi_{8}(n)$ is odd if and only if $\alpha=3$ and $k=1$, i.e., $n=8p_{1}^{\alpha_{1}}$ with $p_{1}\equiv7(\mathrm{mod}\, 8)$.

\textbf{Case 4}.\quad For the case $\{3, 5\}\subseteq R_{\mathbb{P}_{k}}$ or $1\in R_{\mathbb{P}_{k}}$.

\textbf{(A)}\quad If $\{3, 5\}\subseteq R_{\mathbb{P}_{k}}$, i.e., $k\geq 2$, then by (7) we have $\varphi_{8}(n)=\frac{1}{8}\varphi(n)=\frac{1}{8}\varphi(2^{\alpha})\prod _{i=1} ^{k}p_{i}^{\alpha_{i}-1}$ $(p_{i}-1)$.  Thus we must have $k=2$ and $\alpha\leq1$ if $\varphi_{8}(n)$ is odd, namely, $n=p_{1}^{\alpha_{1}}p_{2}^{\alpha_{2}}$ or $2p_{1}^{\alpha_{1}}p_{2}^{\alpha_{2}}$, where $p_{1}\equiv3(\mathrm{mod}\, 8)$ and $p_{2}\equiv5(\mathrm{mod}\, 8)$. Obviously,  $\varphi_{8}(n)=\frac{1}{8}p_{1}^{\alpha_{1}-1}(p_{1}-1)p_{2}^{\alpha_{2}-1}(p_{2}-1)$ is always odd in this case.

\textbf{(B)}\quad If $1\in R_{\mathbb{P}_{k}}$, by (7), $\varphi_{8}(n)=\frac{1}{8}\varphi(n)=\frac{1}{8}\varphi(2^{\alpha})\prod _{i=1} ^{k} p_{i}^{\alpha_{i}-1}(p_{i}-1)$.
Thus, we must have $\alpha\leq1$ and $k=1$ if  $\varphi_{8}(n)$ is odd. Namely, $n=p_{1}^{\alpha_{1}}, 2p_{1}^{\alpha_{1}}$ with $p_{1}\equiv1(\mathrm{mod}\, 8)$, and then $\varphi_{8}(n)=\frac{1}{8}p^{\alpha_{1}-1}(p_{1}-1)$. Obviously, $\varphi_{8}(n)$ is odd if and only if $p_{1}\equiv9(\mathrm{mod}\, 16)$.

From the above, we complete the proof of Theorem 5.1.
\end{proof}

\begin{Thm}\label{thm:5.2} If $n$ is a positive integer, then $\varphi_{12}(n)$ is odd if and only if $n=2^{\alpha}\,(\alpha\geq4)$,
$3\cdot2^{\alpha}\,(\alpha\geq2)$, $2\cdot3^{\beta}\,(\beta\geq2)$,  $4\cdot3^{\beta}\,(\beta\geq2)$, or satisfies the follow table.
\begin{eqnarray*}
\begin{tabular}{|c |c| }\hline
$ n$ &\mbox{conditions}      \\ \hline
$p^{\alpha}$ & $p\equiv13, 17, 19, 23\,(\mathrm{mod}\, 24)$;  \\ \hline
$2p^{\alpha}$ & $p\equiv7, 11, 13, 17\,(\mathrm{mod}\, 24)$;  \\ \hline
$3p^{\alpha}$ & $p\equiv5, 7\,(\mathrm{mod}\, 12)$;   \\ \hline
$4p^{\alpha}$ & $p\equiv5, 7\,(\mathrm{mod}\, 12)$;   \\ \hline
$6p^{\alpha}$ & $p\equiv5, 7\,(\mathrm{mod}\, 12)$;   \\ \hline
$12p^{\alpha}$ & $p\equiv5, 11\,(\mathrm{mod}\, 12)$.  \\ \hline
\end{tabular} \nonumber
\end{eqnarray*}
Where $p>3$ is an odd prime and $\alpha\geq 1$.
\end{Thm}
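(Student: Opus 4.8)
The plan is to read the parity of $\varphi_{12}(n)$ off the explicit formulas in Theorems 4.1 and 4.2, organising everything exactly as in the proof of Theorem 5.1 above. First I would dispose of the small range: $\varphi_{12}(n)=0$, hence even, for every $n<12$, while $\varphi_{12}(12)=\varphi_{12}(24)=1$, and these two values already sit inside the family $3\cdot2^{\alpha}\ (\alpha\ge2)$ listed in the statement. The remaining $n$ I would split according to whether $n$ has a prime factor larger than $3$.

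If $n=2^{\alpha}3^{\beta}$ has no prime factor exceeding $3$, I would run through the branches of Theorem 4.1. In each branch $\varphi_{12}(n)$ is an explicit expression in powers of $2$ and $3$, such as $\tfrac12\bigl(3^{\beta-2}-(-1)^{\alpha+\beta}\bigr)$, $2^{\alpha-2}3^{\beta-2}$ or $\tfrac13\bigl(2^{\alpha+\beta-3}+(-1)^{\alpha+\beta}\bigr)$, and the only thing to do is compute its $2$-adic valuation; a one-line check of the bracketed integer modulo $4$, respectively modulo $6$, then shows that $\varphi_{12}(2^{\alpha}3^{\beta})$ is odd precisely for $2^{\alpha}\ (\alpha\ge4)$, $3\cdot2^{\alpha}\ (\alpha\ge2)$, $2\cdot3^{\beta}\ (\beta\ge2)$ and $4\cdot3^{\beta}\ (\beta\ge2)$, and even for $3^{\beta}$ and for $2^{\alpha}3^{\beta}$ with $\alpha\ge3$, $\beta\ge2$.

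If $n=2^{\alpha}3^{\beta}n_{1}$ with $n_{1}>1$ and $\gcd(n_{1},6)=1$, I would go through the seven branches of Theorem 4.2 one at a time, always writing $\varphi(n)=\varphi(2^{\alpha})\,\varphi(3^{\beta})\prod_{i}p_i^{\alpha_i-1}(p_i-1)$ and using that $2\mid p_i-1$ for each $i$. In the branch $\varphi_{12}(n)=\tfrac1{12}\varphi(n)$ and in each branch of the form $\tfrac1{12}\varphi(n)+c\,(-1)^{\Omega(n)}2^{\,\omega(n)-t}$, examining the $2$-adic valuation of $\varphi(n)$ (and of the correction term, when present) forces $\omega(n_1)=1$, $\alpha\le2$ and $\beta\le1$ whenever $\varphi_{12}(n)$ is odd; so $n$ must be one of $p^{\alpha},2p^{\alpha},3p^{\alpha},4p^{\alpha},6p^{\alpha},12p^{\alpha}$ with $p>3$ prime. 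For each of these six shapes, and each admissible residue set $R_{\mathbb{P}_{1}}'$, the formula becomes $\tfrac1{12}\bigl(\varphi(n)\pm c\bigr)$ with a small constant $c$, and oddness reduces to a single congruence on $p$ which I would obtain by reducing modulo $3$ and modulo a power of $2$: the shapes $p^{\alpha}$ and $2p^{\alpha}$ require $\nu_2(p-1)$, hence $p$ modulo $8$, and combined with $p$ modulo $3$ this yields a condition modulo $24$ (membership in $R_{\mathbb{P}_{1}}'$ alone only gives $p$ modulo $12$), whereas for $3p^{\alpha},4p^{\alpha},6p^{\alpha},12p^{\alpha}$ the extra even factor $\varphi(3)$, $\varphi(4)$, $\varphi(6)$ or $\varphi(12)$ in $\varphi(n)$ supplies the missing power of $2$, so a condition modulo $12$ suffices. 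Assembling the surviving congruences over all branches reproduces the six rows of the table.

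The genuine work, and the main risk of slips, is entirely bookkeeping: Theorem 4.2 has seven branches, each with its own list of admissible pairs $(\alpha,\beta)$ and residue sets, and inside each one must track $\nu_2\bigl(p^{\alpha_i-1}(p_i-1)\bigr)$ carefully enough to decide whether $\tfrac1{12}(\cdot)$ is odd; this is exactly where the modulus must sometimes be refined from $12$ to $16$ or $24$, just as $16$ appeared in the proof of Theorem 5.1. One also has to verify that the case $\omega(n_1)\ge2$ really collapses to ``even'' in every branch --- this is why the table has no row with two distinct prime factors exceeding $3$, even though a bare valuation count would permit, say, two primes $\equiv3\pmod4$; the point is that such configurations fall into the branches of Theorem 4.2 that carry a correction term cancelling the parity. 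Finally one should check that overlapping cases are consistent, for instance that $p\equiv1\pmod{12}$ (so $R_{\mathbb{P}_{1}}'=\{1\}$) lands in the ``otherwise'' branch with $\varphi_{12}(p^{\alpha})=\tfrac1{12}\varphi(p^{\alpha})$, which is odd iff $\nu_2(p-1)=2$, i.e. iff $p\equiv13\pmod{24}$, in agreement with the first row of the table.
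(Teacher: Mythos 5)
Your plan is correct and is essentially the paper's own argument: dispose of $n<12$ and $n=12,24$, settle $n=2^{\alpha}3^{\beta}$ by checking the parity of each branch of Theorem 4.1, and for $n=2^{\alpha}3^{\beta}n_{1}$ with $\gcd(n_1,6)=1$, $n_1>1$ run through the branches of Theorem 4.2 (organized by $R_{\mathbb{P}_k}'$ and by $\alpha,\beta$), using $2$-adic valuations to force $\omega(n_1)=1$, $\alpha\le2$, $\beta\le1$ and then refining the mod $12$ residues to mod $24$ exactly where the paper does. The points you flag as the real work --- the correction terms killing the $\omega(n_1)\ge2$ configurations and the passage from mod $12$ to mod $24$ for the shapes $p^{\alpha}$ and $2p^{\alpha}$ --- are precisely the computations carried out in the paper's Cases 1--4, so no new idea is missing.
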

\begin{proof}
Obviously, by the definition of $\varphi_{12}(n)$ we can get that $\varphi_{12}(n)=0$ for $n<12$ and $\varphi_{12}(n)=1$  for $n=12, 24$, and then we consider about $n>12$ and $n\neq 24$.  First, for the case $n=2^{\alpha}\cdot3^{\beta}$.

If $\alpha=0$, we have $\beta\geq3$, then by (16), $\varphi_{12}(n)=\frac{1}{2}\big(3^{\beta-2}-(-1)^{\beta}\big)$ is even.

If $\alpha=1$, we have $\beta\geq2$, then by (16), $\varphi_{12}(n)=\frac{1}{2}\big(3^{\beta-2}-(-1)^{\beta+1}\big)$ is odd.

If $\alpha=2$, we have $\beta\geq2$, then by (16), $\varphi_{12}(n)=3^{\beta-2}$ is odd.

If $\alpha=3$, we have $\beta\geq2$, then by (16), $\varphi_{12}(8\cdot3^{\beta})=2\cdot3^{\beta-2}$ is even.

If $\alpha\geq4$, then $\beta\geq0$. For $\beta=0$, by (16), $\varphi_{12}(n)=\frac{1}{3}(2^{\alpha-3}+(-1)^{\alpha})$ is odd. For $\beta=1$,
 by (16), $\varphi_{12}(n)=\frac{1}{3}\big(2^{\alpha-2}+(-1)^{\alpha+1}\big)$ is odd. For $\beta\geq2$,  by (16),
 $\varphi_{12}(2^{\alpha}\cdot3^{\beta})=2^{\alpha-2}\cdot3^{\beta-2},$ which is always
 even.

Next, we consider the case $n=2^{\alpha}\,3^{\beta}n_{1}$, where $\alpha \geq0$, $\beta \geq0$, $n_{1}>1$ and $\gcd(n_{1}, 6)=1$.  For convenience, we set   $n=2^{\alpha}\,3^{\beta}\prod_{i=1}^{k} p_{i}^{\alpha_{i}}$,
where $\alpha_{i} \geq1$,  $p_{i}$ is an odd prime and $p_{i}>3(1\leq i\leq k)$.
By Theorem 4.2 we have the following four cases.

\textbf{Case 1}.\quad $R_{\mathbb{P}_{k}}'=\{7, 11\}$ or $\{7\}$.

\textbf{(A)}\quad $\alpha=0$.  If $\beta=0$, i.e., $\omega(n)\geq 1$, from (17) we have $\varphi_{12}(n)=\frac{1}{12}\varphi(n)+\frac{1}{4}\,(-1)^{\Omega(n)}\,2 \,^{\omega(n)}$.
Thus, from the assumption $R_{\mathbb{P}_{k}}'=\{7, 11\}\ \textmd{or}\ \{7\}$, we must have $\omega(n)\leq 2$ if $\varphi_{12}(n)$ is odd. For $\omega(n)= 2$, i.e., $n=p_{1}^{\alpha_{1}}p_{2}^{\alpha_{2}}$, note that $R_{\mathbb{P}_{k}}'=\{7, 11\}\ \textmd{or}\ \{7\}$, and then $$\varphi_{12}(n)=\frac{1}{3}\,p_{1}^{\alpha_{1}-1}\,p_{2}^{\alpha_{2}-1}\cdot\frac{p_{1}-1}{2}\cdot\frac{p_{2}-1}{2}+(-1)^{\alpha_{1}
+\alpha_{2}}$$
is always even in this case. And so $\omega(n)=1$, i.e., $n=p_{1}^{\alpha_{1}}$, by (20),
$$\varphi_{12}(n)=\frac{1}{12}\big(p_{1}^{\alpha_{1}-1}(p_{1}-1)
+6\cdot(-1)^{\alpha_{1}}\big).$$
Note that $p_{1}\equiv7(\mathrm{mod}\,12)$, i.e., $p_{1}\equiv7, 19(\mathrm{mod}\,24)$. If $p_{1}\equiv7(\mathrm{mod}\,24)$, then $p_{1}^{\alpha_{1}-1}(p-1)+6\cdot(-1)^{\alpha_{1}}\equiv0(\mathrm{mod}\, 24)$, which means that $\varphi_{12}(n)$ is even. And so $p_{1}\equiv19(\mathrm{mod}\,24)$, thus $p_{1}^{\alpha_{1}-1}(p_{1}-1)+6\cdot(-1)^{\alpha_{1}}\equiv12(\mathrm{mod}\, 24)$, namely, $\varphi_{12}(n)$ is odd.

 If $\beta=1$, i.e., $\omega(n)\geq 2$, by (17),  $\varphi_{12}(n)=\frac{1}{12}\varphi(n)=\frac{1}{6}\prod _{i=1} ^{k} p_{i}^{\alpha_{i}-1}(p_{i}-1)$. Similarly, we must have $\omega(n)=2$ if $\varphi_{12}(n)$ is odd. In this case, $n=3p_{1}^{\alpha}$ with $p_{1}\equiv7(\mathrm{mod}\, 12)$,
 then $\varphi_{12}(n)=\frac{1}{6}p_{1}^{\alpha-1}(p_{1}-1)$, easy to see that $\varphi_{12}(n)$ is always even.
 If $\beta\geq2$, i.e., $\omega(n)\geq 2$, by (17), $\varphi_{12}(n)=\frac{1}{12}\varphi(n)+\frac{1}{4}\,(-1)^{\Omega(n)+1}\,2 \,^{\omega(n)}$. Similarly, we must have $\omega(n)=2$, i.e., $n=3^{\beta}p_{1}^{\alpha_{1}}$ if $\varphi_{12}(n)$ is odd. Since $p_{1}\equiv7(\mathrm{mod}\, 12)$, and so  $\varphi_{12}(n)=3^{\beta-2}p_{1}^{\alpha_{1}-1}\cdot\frac{p_{1}-1}{2}+(-1)^{\beta+\alpha_{1}+1}$ is always even.

\textbf{(B)}\quad $\alpha=1$. If $\beta=0$, i.e., $\omega(n)\geq2$, by (25), $\varphi_{12}(n)=\frac{1}{12}\varphi(n)+\frac{1}{4}(-1)^{\Omega(n)}2^{\omega(n)-1}$. Similarly, we must have $\omega(n)\leq3$ if $\varphi_{12}(n)$ is odd. For $\omega(n)=3$, i.e., $n=2p_{1}^{\alpha_{1}}p_{2}^{\alpha_{2}}$, note that $R_{\mathbb{P}_{k}}'=\{7, 11\}\ \textmd{or}\ \{7\}$, and then easy to see that
$\varphi_{12}(n)$ is always even.
And so $\omega(n)= 2$, i.e.,  $n=2p_{1}^{\alpha_{1}}$, note that $p_{1}\equiv7(\mathrm{mod}\, 12)$, namely, $p_{1}\equiv7, 19(\mathrm{mod}\, 24)$. In this case,  $\varphi_{12}(n)$ is odd if and only if $p_{1}\equiv7(\mathrm{mod}\, 24)$.

 If $\beta=1$, i.e., $\omega(n)\geq3$, by (17), $\varphi_{12}(n)=\frac{1}{12}\varphi(n)=\frac{1}{6}\prod _{i=1} ^{k} p_{i}^{\alpha_{i}-1}(p_{i}-1)$. Similarly, from $R_{\mathbb{P}_{k}}'=\{7, 11\}\ \textmd{or}\ \{7\}$, we can get $\varphi_{12}(n)$ is odd if and only if $\omega(n)=3$, i.e., $n=6p_{1}^{\alpha}$ with $p_{1}\equiv7(\mathrm{mod}\, 12)$.

 If $\beta\geq2$, i.e., $\omega(n)\geq3$, by(17),  $\varphi_{12}(n)=\frac{1}{12}\varphi(n)+\frac{1}{4}(-1)^{\Omega(n)+1}2 ^{\omega(n)-1}$. Then we must have $\omega(n)=3$
if $\varphi_{12}(n)$ is odd, namely, $n=2\cdot3^{\beta}p_{1}^{\alpha_{1}}$ with $p_{1}\equiv7(\mathrm{mod}\, 12)$. Obviously,  $\varphi_{12}(n)=3^{\beta-2}p^{\alpha-1}\cdot\frac{p-1}{2}+(-1)^{2+\beta+\alpha}$ is always even in this case.

\textbf{(C)}\quad $\alpha=2$. If $\beta=0$, i.e., $\omega(n)\geq 2$, by (17),  $\varphi_{12}(n)=\frac{1}{12}\varphi(n)=\frac{1}{6}\prod _{i=1} ^{k} p_{i}^{\alpha_{i}-1}(p_{i}-1)$. Thus, we must have $\omega(n)=2$ if $\varphi_{12}(n)$ is odd. In this case, $n=4p_{1}^{\alpha_{1}}$ with $p_{1}\equiv7(\mathrm{mod}\, 12)$, then $p_1^{\alpha_1}(p_1-1)\equiv 6(\mathrm{mod}\, 12)$, which means that $\varphi_{12}(n)$ is always odd in this case.

 If $\beta\geq1$, i.e., $\omega(n)\geq 3$, by (17), $\varphi_{12}(n)=\frac{1}{12}\varphi(n)= 3^{\beta-2}\prod _{i=1} ^{k} p_{i}^{\alpha_{i}-1}(p_{i}-1)$.  Note that $R_{\mathbb{P}_{k}}'=\{7, 11\}\ \textmd{or}\ \{7\}$, and then $\varphi_{12}(n)$ is always even.

\textbf{(D)}\quad $\alpha\geq3$. By (17) and $R_{\mathbb{P}_{k}}'=\{7, 11\}\ \textmd{or}\ \{7\}$, $\varphi_{12}(n)=\frac{1}{12}\varphi(n)=\frac{1}{3}\cdot2^{\alpha-3}\varphi(3^{\beta}n_{1})$ is always even in this case.

\textbf{Case 2}.\quad $R_{\mathbb{P}_{k}}'=\{5, 11\}$ or $\{5\}$.

\textbf{(A)}\quad $\alpha=0$. If $\beta=0$, i.e., $\omega(n)\geq1$, from (21),  we can get  $\varphi_{12}(n)=\frac{1}{12}\,\varphi(n)+\frac{1}{6}\,(-1)^{\Omega(n)}\,2^{\omega(n)}$. Thus, we must have $\omega(n)=1$ if $\varphi_{12}(n)$ is odd, namely, $n=p_{1}^{\alpha_{1}}$ with $p_{1}\equiv5(\mathrm{mod}\, 12)$. Hence
$$\varphi_{12}(p_{1}^{\alpha_{1}})=\frac{1}{12}\,p_{1}^{\alpha_{1}-1}(p_{1}-1)+\frac{1}{3}\,(-1)^{\alpha_{1}}
=\frac{1}{3}\Big(p_{1}^{\alpha_{1}-1}\cdot\frac{p_{1}-1}{4}+(-1)^{\alpha_{1}}\Big). $$
Note that $p_{1}\equiv5(\mathrm{mod}\, 12)$, i.e., $p_{1}\equiv5,17(\mathrm{mod}\, 24)$. If $p_{1}\equiv5(\mathrm{mod}\, 24)$, then $p_{1}^{\alpha-1}\cdot\frac{p_{1}-1}{4}+(-1)^{\alpha_{1}}\equiv0(\mathrm{mod}\, 6)$, which means $\varphi_{12}(p_{1}^{\alpha})$ is always even. And so $p_{1}\equiv17(\mathrm{mod}\, 24)$, in this case $p_{1}^{\alpha_{1}-1}\cdot\frac{p_{1}-1}{4}+(-1)^{\alpha_{1}}\equiv3(\mathrm{mod}\, 6)$, namely, $\varphi_{12}(p^{\alpha_{1}})$ is odd.

 If $\beta=1$, i.e., $\omega(n)\geq2$, from (23) we have $\varphi_{12}(n)=\frac{1}{12}\,\varphi(n)+\frac{1}{6}\,(-1)^{\Omega(n)}\,2^{\omega(n)-1}$. Thus, we must have
 $\omega(n)=2$ if $\varphi_{12}(n)$ is odd, namely, $n=3p_{1}^{\alpha_{1}}$ with $p_{1}\equiv5(\mathrm{mod}\, 12)$, in this case
$$\varphi_{12}(3p_{1}^{\alpha_{1}})
=\frac{1}{3}\Big(2\,p_{1}^{\alpha_{1}-1}\cdot\frac{p_{1}-1}{4}+(-1)^{\alpha_{1}}\Big)$$
is always odd.

 If $\beta\geq2$, i.e., $\omega(n)\geq2$, from (24) we can get $\varphi_{12}(n)=\frac{1}{12}\,\varphi(n)$. We must have $\omega(n)=2$, i.e., $n=3^{\beta}p^{\alpha}(\beta\geq2)$, if $\varphi_{12}(n)$ is odd. From the assumption $p_{1}\equiv5(\mathrm{mod}\, 12)$, $\varphi_{12}(3^{\beta}\,p_{1}^{\alpha_{1}})=\frac{1}{12}\,\varphi(3^{\beta}\,p_{1}^{\alpha_{1}})
=2\cdot 3^{\beta-1}\,p_{1}^{\alpha_{1}-1}\cdot\frac{p_{1}-1}{4}$ is always even.

\textbf{(B)}\quad $\alpha=1$, i.e., $\omega(n)\geq 2$. By (25)-(27), then we must have $\omega(n)\leq3$ if $\varphi_{12}(n)$ is odd.
Namely, $n=2p_{1}^{\alpha_{1}}, 2p_{1}^{\alpha_{1}}p_{2}^{\alpha_{2}}, 6p_{1}^{\alpha_{1}}$, or $2\cdot3^{\beta}p_{1}^{\alpha_{1}}(\beta\geq2)$. Similar to the proof of (A) in case 1, $\varphi_{12}(n)$ is odd if and only if $n=2p_{1}^{\alpha_{1}}$  with $p_{1}\equiv17(\mathrm{mod}\, 24)$, or $n=6p_{1}^{\alpha_{1}}$  with  $p_{1}\equiv5(\mathrm{mod}\, 12)$.

\textbf{(C)}\quad $\alpha=2$. If $\beta=0$, i.e., $\omega(n)\geq2$, by (28), $\varphi_{12}(n)=\frac{1}{12}\varphi(n)+\frac{1}{12}(-1)^{\Omega(n)+1}2^{\omega(n)}$,
then we must have $\omega(n)=2$ if $\varphi_{12}(n)$ is odd. In this case, $n=4p_{1}^{\alpha_{1}}$ with $p_{1}\equiv5(\mathrm{mod}\, 12)$. Hence,
$\varphi_{12}(n)=\frac{1}{6}\,p_{1}^{\alpha_{1}-1}(p_{1}-1)+\frac{1}{3}\,(-1)^{\alpha_{1}+3}
=\frac{1}{3}\Big(p_{1}^{\alpha_{1}-1}\frac{p_{1}-1}{2}+(-1)^{\alpha_{1}+3}\Big)$ is always odd.

If $\beta=1$, i.e., $\omega(n)\geq3$, by (29), $\varphi_{12}(n)=\frac{1}{12}\varphi(n)+\frac{1}{12}(-1)^{\Omega(n)+1}2^{\omega(n)-1}$, we must have $\omega(n)=3$ if $\varphi_{12}(n)$ is odd. In this case, $n=12 \,p_{1}^{\alpha_{1}}$ with $p_{1}\equiv5(\mathrm{mod}\, 12)$, and then
$\varphi_{12}(n)=\frac{1}{3}\,p_{1}^{\alpha_{1}-1}(p_{1}-1)+\frac{1}{3}\,(-1)^{\alpha_{1}+4}
=\frac{1}{3}\big(p_{1}^{\alpha_{1}-1}(p_{1}-1)+(-1)^{\alpha_{1}+4}\big)$ is odd.

If $\beta\geq 2$, i.e., $\omega(n)\geq3$, by (30), $\varphi_{12}(n)=\frac{1}{12}\varphi(n)$, we must have $\omega(n)=3$ if $\varphi_{12}(n)$ is odd. Namely, $n=4\cdot3^{\beta} p_{1}^{\alpha_{1}}$ with $p_{1}\equiv5(\mathrm{mod}\, 12)$, and then
$$\varphi_{12}(n)=\frac{1}{12}\,\varphi(n)=3^{\beta-2}p_{1}^{\alpha_{1}-1}(p_{1}-1)$$ is always even.

\textbf{(D)}\quad  $\alpha\geq3$, i.e., $\omega(n)\geq 2$.
If $\beta=0$, then by (31) and $R_{\mathbb{P}_{k}}'=\{5, 11\}\ \textmd{or}\ \{5\}$, we konw that $\varphi_{12}(n)=\frac{1}{12}\varphi(n)+\frac{1}{6}(-1)^{\Omega(n)}\,2 \,^{\omega(n)}$ is always even in this case.

If $\beta=1$. By (32) and $R_{\mathbb{P}_{k}}'=\{5, 11\}\ \textmd{or}\ \{5\}$, we know that $\varphi_{12}(n)=\frac{1}{12}\varphi(n)+\frac{1}{12}(-1)^{\Omega(n)}\,2 \,^{\omega(n)}$ is always even in this case.

If $\beta\geq2$. By (33) and $R_{\mathbb{P}_{k}}'=\{5, 11\}\ \textmd{or}\ \{5\}$, $\varphi_{12}(n)=\frac{1}{12}\varphi(n)$ is always even in this case.

\textbf{Case 3}.\quad $R_{\mathbb{P}_{k}}'=\{11\}$.

\textbf{(A)}\quad  $\alpha=0$, i.e., $\omega(n)\geq 1$. From (22)-(24), we must have $\omega(n)\leq2$ if $\varphi_{12}(n)$ is odd.
For $\omega(n)=2$, i.e., $n=p_{1}^{\alpha_{1}}p_{2}^{\alpha_{2}}$, or $3^{\beta} p_{1}^{\alpha_{1}}(\beta\geq1)$ with $p_{1}\equiv p_{2}\equiv11(\mathrm{mod}\, 12)$. Thus, by (22)-(24), $\varphi_{12}(n)$ is  always even.
Hence, $\omega(n)=1$,  i.e., $n=p_{1}^{\alpha_{1}}$ with $p\equiv11(\mathrm{mod}\, 12)$, then from (22) we can get
$$\varphi_{12}(p_{1}^{\alpha_{1}})=\frac{1}{12}\,p_{1}^{\alpha_{1}-1}(p_{1}-1)+\frac{5}{6}\,(-1)^{\alpha_{1}}
=\frac{1}{6}\Big(p_{1}^{\alpha_{1}-1}\cdot\frac{p_{1}-1}{2}+5(-1)^{\alpha_{1}}\Big).$$
Note that $p_{1}\equiv11(\mathrm{mod}\, 12)$, i.e., $p_{1}\equiv11, 23(\mathrm{mod}\, 24)$. If $p_{1}\equiv11(\mathrm{mod}\, 24)$, then
 $$p_{1}^{\alpha_{1}-1}\cdot\frac{p_{1}-1}{2}+5(-1)^{\alpha_{1}}
\equiv5(-1)^{\alpha_{1}-1}+5(-1)^{\alpha_{1}}\equiv0(\mathrm{mod}\, 12),$$
namely, $\varphi_{12}(n)$  is even.
If $p_{1}\equiv23(\mathrm{mod}\, 24)$, then
 $$p_{1}^{\alpha_{1}-1}\cdot\frac{p_{1}-1}{2}+5(-1)^{\alpha_{1}}
\equiv11(-1)^{\alpha_{1}-1}+5(-1)^{\alpha_{1}}\equiv6(\mathrm{mod}\, 12),$$
namely, $\varphi_{12}(n)$  is odd.

\textbf{(B)}\quad $\alpha=1$, i.e., $\omega(n)\geq 2$. From (25)-(27), we must have $\omega(n)\leq3$ if $\varphi_{12}(n)$ is odd.
Namely, $n= 2p_{1}^{\alpha_{1}}$, $2p_{1}^{\alpha_{1}}p_{2}^{\alpha_{2}}$, $6p_{1}^{\alpha_{1}}$, or $2\cdot3^{\beta} p_{1}^{\alpha_{1}}(\beta\geq2)$ with $p_{1}\equiv p_{2}\equiv11(\mathrm{mod}\, 12)$. Using the same method as (A) in case 1, $\varphi_{12}(n)$ is odd if and only if $n=2p_{1}^{\alpha_{1}}$ with $p_{1}\equiv11(\mathrm{mod}\,24)$.

\textbf{(C)}\quad $\alpha=2$, i.e., $\omega(n)\geq 2$. If $\beta=0$, by (28), we must have $\omega(n)=2$ if $\varphi_{12}(n)$ is odd, namely, $n=4p_{1}^{\alpha_{1}}$ with $ p_{1}\equiv11(\mathrm{mod}\, 12)$.
Then by (28), $$\varphi_{12}(4p_{1}^{\alpha_{1}})=\frac{1}{3}\Big(p_{1}^{\alpha_{1}-1}\frac{p_{1}-1}{2}+(-1)^{\alpha_{1}+3}\Big)$$ is always even.

If $\beta\geq1$, i.e., $\omega(n)\geq3$, by (29)-(30),  we must have $\omega(n)=3$ if $\varphi_{12}(n)$ is odd.
Namely, $n=4\cdot3^{\beta} p_{1}^{\alpha_{1}}(\beta\geq 1)$ with
$p_{1}\equiv11(\mathrm{mod}\, 12)$.
 If $\beta\geq2$, then by(30),
$$\varphi_{12}(4\cdot3^{\beta} p_{1}^{\alpha_{1}})=\frac{1}{12}\varphi(4\cdot3^{\beta} p_{1}^{\alpha_{1}})=3^{\beta-2}\,p_{1}^{\alpha_{1}-1}(p_{1}-1)$$
is always even.  And so $\beta=1$, by (29),
$\varphi_{12}(12\,p_{1}^{\alpha_{1}})=\frac{1}{3}\big(p_{1}^{\alpha_{1}-1}(p_{1}-1)+(-1)^{\alpha_{1}+3}\big)$ is odd.

\textbf{(D)}\quad $\alpha\geq3$ . If $\beta=0$, i.e., $\omega(n)\geq 2$, then by (31) and $R_{\mathbb{P}_{k}}'=\{11\}$, we know that $\varphi_{12}(n)=\frac{1}{12}\varphi(n)+\frac{1}{6}(-1)^{\Omega(n)}\,2 \,^{\omega(n)}$ is always even in this case.

If $\beta=1$, i.e., $\omega(n)\geq 3$, then by (32) and $R_{\mathbb{P}_{k}}'=\{11\}$, we know that $\varphi_{12}(n)=\frac{1}{12}\varphi(n)+\frac{1}{12}(-1)^{\Omega(n)}\,2 \,^{\omega(n)}$ is always even in this case.

If $\beta\geq2$, i.e., $\omega(n)\geq 3$, then by (33) and $R_{\mathbb{P}_{k}}'=\{11\}$, we know that $\varphi_{12}(n)=\frac{1}{12}\varphi(n)
=2^{\alpha-2}\cdot3^{\beta-1}\prod_{i=1}^{k} p_{i}^{\alpha_{i}-1}( p_{i}-1)$ is always even.

\textbf{Case 4}.\quad $\{5, 7\}\subseteq R_{\mathbb{P}_{k}}'$ or $1\in R_{\mathbb{P}_{k}}'$.

\textbf{(A)}\quad If $\{5, 7\}\subseteq  R_{\mathbb{P}_{k}}'$, by (17) we have $\varphi_{12}(n)=\frac{1}{12}\varphi(n)$ is always even.

\textbf{(B)}\quad  If $1\in R_{\mathbb{P}_{k}}'$, then by (17), $\varphi_{12}(n)=\frac{1}{12}\varphi(n)$, thus we must have $k=1$, $\alpha\leq1$, and $\beta=0$ if $\varphi_{12}(n)$ is odd. Namely, $n=p_{1}^{\alpha_{1}}$ or $2p_{1}^{\alpha_{1}}$ with $p_{1}\equiv1(\mathrm{mod}\, 12)$.
In this case, $\varphi_{12}(n)=\frac{1}{12}\,p_{1}^{\alpha_{1}-1}(p_{1}-1)$ is odd if and only if $p_{1}\equiv13(\mathrm{mod}\, 24)$.

From the above, we complete the proof of Theorem 5.2.
\end{proof}

\section{Final Remark}\label{sec:6}

In [4] and [7], Cai,  et al.  gave the explicit  formulae  of the generalized Euler functions $\varphi_{e}(n)$ for $e=3, 4, 6$.
The key point is that the computing for $[\frac{n}{e}]$ can depend on the computing of the corresponding Jacobi symbol for $e=3,4,6$.
In the present paper, based on Lemmas 2.1-2.2, the exact formulae of $\varphi_8(n)$ and $\varphi_{12}(n)$ are given and then the parity is determined.

\begin{Conj}\label{conj6.1}
Let $e>1$ be a given integer. For any integer $d>2$ with $\gcd(d, e)=1$, there exist $u\in\mathbb{Q}$,  $a_{1}, a_{2}, a_{3}$, $b_{i}\,(1\leq j\leq r) \in \mathbb{Z}$, and $q_{j}\,(1\leq j\leq r)\in \mathbb{P}$, such that
\begin{eqnarray}\label{eq:d/e1}
\;\;\;\;\;\;\Big[\frac{d}{e}\Big]=u\Big(a_{1}d+a_{2}+a_{3}\Big(\frac{-1}{d}\Big)+\sum_{j=1}^{r}b_{j}\Big(\frac{\varepsilon_{j}\, q_{j}}{d}\Big)\Big)\;\mbox{$($$d$  odd$)$},
\end{eqnarray}
or
\begin{eqnarray}\label{eq:d/e2}
\Big[\frac{d}{e}\Big]=u\Big(a_{1}d+a_{2}+\sum_{j=1}^{r}b_{j}\Big(\frac{\varepsilon_{j}\,d}{q_{j}}\Big)\Big)
\;\mbox{$($$d$  even$)$}, \end{eqnarray}
where   $r\geq 1$ and $\varepsilon_{j}\in\{1, -1\}$.
\end{Conj}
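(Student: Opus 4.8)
We outline a strategy; as this is a conjecture the proposal is partly a proof (for a restricted range of $e$) and partly a plan to identify the correct general statement. The plan is to reduce \eqref{eq:d/e1}--\eqref{eq:d/e2} to a finite linear-algebra problem over $(\mathbb{Z}/e\mathbb{Z})^{\times}$ and then attack it via quadratic reciprocity, exactly as Lemma \ref{lem:2.1} does for $e=8,12$. Write $\rho_{e}(d)$ for the least nonnegative residue of $d$ modulo $e$; since $\gcd(d,e)=1$ this is a function $(\mathbb{Z}/e\mathbb{Z})^{\times}\to\{1,\dots,e-1\}$ and $[d/e]=\tfrac{1}{e}\bigl(d-\rho_{e}(d)\bigr)$. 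Comparing leading terms forces $ua_{1}=\tfrac{1}{e}$, so \eqref{eq:d/e1} becomes equivalent to the assertion that $\rho_{e}$ lies in the $\mathbb{Q}$-span of the constant function, of $d\mapsto\bigl(\tfrac{-1}{d}\bigr)$, and of finitely many functions $d\mapsto\bigl(\tfrac{\varepsilon_{j}q_{j}}{d}\bigr)$ with $q_{j}$ prime --- and similarly for \eqref{eq:d/e2} with $\bigl(\tfrac{\varepsilon_{j}d}{q_{j}}\bigr)$. First I would make this reduction precise, noting that such a symbol carries information about $d\bmod e$ only when $q_{j}\mid e$ or $q_{j}=2$.

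The engine is quadratic reciprocity: for an odd prime $q$, $\bigl(\tfrac{q}{d}\bigr)=(-1)^{\frac{q-1}{2}\cdot\frac{d-1}{2}}\bigl(\tfrac{d}{q}\bigr)$, so each admissible symbol equals $\pm\bigl(\tfrac{-1}{d}\bigr)^{\delta}$ times a Legendre symbol modulo $q_{j}$ (or $\bigl(\tfrac{\pm2}{d}\bigr)$ when $q_{j}=2$); in every case it is a \emph{quadratic} Dirichlet character in $d$. Hence the reachable span is precisely the $\mathbb{Q}$-span of the quadratic characters visible to $d$ modulo $\operatorname{lcm}(8,e)$. When $(\mathbb{Z}/e\mathbb{Z})^{\times}$ has exponent $\le2$ --- equivalently $e\mid 24$, that is $e\in\{1,2,3,4,6,8,12,24\}$ --- \emph{every} character of $(\mathbb{Z}/e\mathbb{Z})^{\times}$ is quadratic, so these characters span the full function space on $(\mathbb{Z}/e\mathbb{Z})^{\times}$, $\rho_{e}$ automatically lies in it, and solving the resulting $\varphi(e)\times\varphi(e)$ linear system over $\mathbb{Q}$ produces explicit $u,a_{i},b_{j},q_{j}$. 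This proves the conjecture for every such $e$; the cases $e=8,12$ of Lemma \ref{lem:2.1} are exactly this computation, and $e=24$ is the first genuinely new instance. I would write this out as Step~2.

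The hard part is everything beyond $e\mid 24$, and I expect it to be the real obstacle. There the quadratic characters no longer span: the dimension of their span equals the number of elements of order $\le2$ in $\widehat{(\mathbb{Z}/e\mathbb{Z})^{\times}}$, which is strictly less than $\varphi(e)$ as soon as $(\mathbb{Z}/e\mathbb{Z})^{\times}$ has an element of order $>2$. A slicing argument makes the failure explicit: fix a prime $p\mid e$ with $p\ge5$ and let $d$ vary only in its residue modulo $p$, keeping $\gcd(d,e)=1$ and every other relevant residue fixed; then all admissible symbols except $\bigl(\tfrac{\pm p}{d}\bigr)$ stay constant, so the right-hand side of \eqref{eq:d/e1} is an affine function of the single symbol $\bigl(\tfrac{d}{p}\bigr)$ and takes at most two values, whereas $\rho_{e}$ takes the $p-1\ge4$ values $1,\dots,p-1$ on that slice. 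Thus the literal reading of Conjecture \ref{conj6.1} already fails for $e=5$, and part of the plan must be to settle the intended formulation: either restrict $e$ to divisors of $24$ (for which Steps~1--2 are a complete proof), or broaden the stock of elementary ingredients. Since enlarging the numerators to composite squarefree integers still only yields quadratic characters, genuinely non-quadratic ingredients --- quadratic Gauss sums, or recursive $[\,\cdot/e']$-terms for proper divisors $e'\mid e$, which is already how the $\varphi_{12}$ computation proceeds --- must enter, and assembling them into a single uniform, parity-friendly identity valid for all $e$ is the crux I would concentrate on.
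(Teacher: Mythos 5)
First, a point of comparison that matters for how your submission should be judged: the paper does \emph{not} prove this statement. It is offered as a conjecture, supported only by the remark that the cases $e=2,3,4,6,8,12$ follow from (2)--(3) and the formulas of [4], [10]; so there is no proof in the paper for your argument to match, and what you submit is necessarily a mixture of partial verification and proposed disproof rather than a proof. Within that scope, your reduction is sound and is exactly the mechanism behind Lemma 2.1: comparing leading terms forces $ua_{1}=\tfrac1e$, and (34)/(35) become the requirement that the least-residue function $\rho_{e}$ lie in the $\QQ$-span of the constant function and the characters realized by the admissible symbols. Your $e=5$ slicing obstruction is also essentially correct, with the caveat that it refutes the uniform-in-$d$ reading (coefficients depending only on $e$); read word for word, the conjecture quantifies the coefficients \emph{after} $d$ and is then trivially satisfiable, so what you really have is a case for reformulating the statement, which is worth saying explicitly.

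The concrete gap is your Step 2 claim that the conjecture holds for every $e\mid 24$, ``with $e=24$ the first genuinely new instance.'' The conjecture restricts the numerators to $\pm q_{j}$ with $q_{j}$ prime, so the reachable span is not, as you assert, the span of \emph{all} quadratic characters modulo $\mathrm{lcm}(8,e)$: for odd $d$ it is spanned by $1$, $\big(\frac{-1}{\cdot}\big)$, $\big(\frac{\pm2}{\cdot}\big)$ and $\big(\frac{\pm q}{\cdot}\big)$ for single primes $q\mid e$ (symbols with $q\nmid e$, $q$ odd, cannot contribute, by linear independence of characters), and products such as $\big(\frac{\pm6}{\cdot}\big)$ are not available. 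For $e=24$ this is fatal to your claim: the function space on $(\ZZ/24\ZZ)^{\times}$ needs all eight characters, the prime-numerator symbols give only the six characters $1,\chi_{-1},\chi_{2},\chi_{-1}\chi_{2},\chi_{3},\chi_{-1}\chi_{3}$, and $\rho_{24}$ has a nonzero component along the missing character $\chi_{2}\chi_{3}=\big(\frac{-6}{\cdot}\big)$, since $\sum_{d\in(\ZZ/24\ZZ)^{\times}}\rho_{24}(d)\big(\frac{-6}{d}\big)=(1+5+7+11)-(13+17+19+23)=-48\neq0$. So your argument does not prove the case $e=24$; on the contrary, $e=24$ is itself a counterexample to the uniform statement with prime numerators, and is already the point where your proposed repair (allowing squarefree composite numerators) becomes necessary. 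For the cases the paper actually cites, $e=2,3,4,6,8,12$, your Step 2 does recover the known formulas, consistent with how Lemma 2.1 and the references obtain (2)--(3).
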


Easy to see that the conjecture 6.1 is true for $e=2,3,4,6,8$ and $12$.
(see [4], [10] and (2), (3)).

\section{Acknowledgements}

The present paper is supported by Natural Science Foundation of China with No.11861001 and No.12071321, the Applied Basic Research Projects for Sichuan Province with No.2018JY0458, and the Construction Plan for Scientific Research Innovation Team of Provincial Colleges and Universities in Sichuan with No.18TD0047.

\end{document}